\documentclass[11pt]{amsart}
\usepackage{cases,amsmath,amssymb, hyperref}

\topmargin 0mm \evensidemargin 15mm \oddsidemargin 15mm \textwidth
140mm \textheight 230mm

\theoremstyle{plain}
\newtheorem{theorem}                {Theorem}      [section]
\newtheorem*{theorem*}                {Theorem \ref{thm:appl}}
\newtheorem{proposition}  [theorem]  {Proposition}
\newtheorem{corollary}    [theorem]  {Corollary}
\newtheorem{lemma}        [theorem]  {Lemma}

\theoremstyle{definition}

\newtheorem{remark}       [theorem]  {Remark}

\DeclareMathOperator{\trace}{trace} 

\DeclareMathOperator{\Div}{div} 
 
\DeclareMathOperator{\Ricci}{Ricci}
\DeclareMathOperator{\Scal}{Scal}
\DeclareMathOperator{\Vol}{Vol}

\DeclareMathOperator{\grad}{grad}
\DeclareMathOperator{\Int}{int}
\numberwithin{equation}{section}

\begin{document}

\title[ Unique Continuation Property for Biharmonic Hypersurfaces in Spheres ]{Unique Continuation Property for Biharmonic Hypersurfaces in Spheres }

\author{Hiba Bibi}
\author{Eric Loubeau}
\author{Cezar Oniciuc}

\thanks{This work was supported by a grant of the Romanian Ministry of Research and Innovation,
CCCDI-UEFISCDI, project number PN-III-P3-3.1-PM-RO-FR-2019-0234 / 1BM / 2019, within
PNCDI III and the PHC Brancusi 2019 project no 43460 TL}

\address{Univ. Brest, CNRS UMR 6205, LMBA, F-29238 Brest, France}\email{Hiba.Bibi@etudiant.univ-brest.fr}\email{Eric.Loubeau@univ-brest.fr}

\address{Faculty of Mathematics\\ Al. I. Cuza University of Iasi\\
Bd. Carol I, no. 11 \\ 700506 Iasi, Romania} \email{oniciucc@uaic.ro}

\subjclass[2010]{58E20, 53C43, 31B30}

\begin{abstract}
We study  properties of non-minimal biharmonic hypersurfaces of spheres. The main result is a CMC Unique Continuation Theorem for biharmonic hypersurfaces of spheres. We  then deduce new rigidity theorems    to support the  Conjecture that   biharmonic submanifolds of  Euclidean spheres must be of constant mean curvature.
\end{abstract}

\keywords{Biharmonic Submanifolds, Constant Mean Curvature, Unique Continuation}

\maketitle

\section{Introduction} 
The study of biharmonic maps was introduced by G.-Y. Jiang  in the mid-80's and, in \cite{Jiangmaps},  he defined biharmonic maps as critical points of the bienergy functional 
$$E_{2}:C^{\infty}(M, N)\to \mathbb{R}, \ \ \ \ E_{2}(\varphi)=\int_{M}\vert \tau(\varphi)\vert ^{2} \ dv_{g},$$
as suggested first by J. Eells and L. Lemaire in  \cite{Eells-Lemaire}. Hence, biharmonic maps come from a variational problem,  generalizing the well-known harmonic maps. Using a simple Bochner formula,
 G.-Y. Jiang proved that  biharmonic maps from a compact manifold to a non-positively curved  space is harmonic,  so the first interesting     target manifold is the Euclidean sphere. By definition,  biharmonic submanifolds are isometric immersions which are biharmonic maps.
 
Independently, in \cite{B-Y. Chen},  B.-Y. Chen  defined  biharmonic submanifolds of the Euclidean space as isometric immersions with harmonic mean curvature vector field, or alternatively, the components of the immersion are   biharmonic functions. In \cite{Ishikawa, J2} it was  proved that biharmonic surfaces in $\mathbb{R}^{3}$ are minimal. This has led to Chen's Conjecture \cite{43}: \textit{Biharmonic submanifolds of Euclidean spaces are minimal}. Some  particular subcases have been proved for example in \cite{Akutagawa and Maeta, Dimitric, Hasanis}.

For  hypersurfaces of the Euclidean sphere, it is natural and useful to split the Euler-Lagrange equation into its tangential and normal components  and to rewrite the biharmonic equation as follows:  \begin{eqnarray*}
\left\lbrace
\begin{array}{ccc}
\Delta f&=& (m-\vert A \vert ^{2})f\\\
A(\grad f)&=&-\frac{m}{2}f \grad f  ,
\end{array}\right.
\end{eqnarray*}
see \cite{B-Y. Chen, Chen,  Oniciuc 2002} (refer to Section \ref{K} for notations).

The first example of a  non-minimal biharmonic hypersurface (due to  G.-Y. Jiang in \cite{Jiangmaps}) is the Generalized Clifford torus $\mathbb{S}^{m_{1}}(\frac{1}{\sqrt{2}})\times \mathbb{S}^{m_{2}}(\frac{1}{\sqrt{2}})$ in $\mathbb{S}^{m_{1}+m_{2}+1}$, with $m_{1}\neq m_{2}$. Recall that, when $m_{1}=m_{2}$, this  Clifford torus is minimal.
In \cite{Oniciuc 2001}, the authors observed that the 45-th parallel  $\mathbb{S}^{m}(\frac{1}{\sqrt{2}})$ in $\mathbb{S}^{m+1},$ which is  umbilical, is another non-minimal biharmonic hypersurface. These two examples have constant mean curvature (CMC) and motivate the following
Conjecture \cite{BMO}: \textit{Any biharmonic submanifold in an Euclidean sphere is CMC}.

This
Conjecture was settled for spheres of dimension 3 in \cite{Oniciuc 2001} and  dimension 4 in \cite{2010}. 
For  arbitrary dimensions, Y. Fu and M.-C. Hong  \cite{fu-hong} proved the Conjecture when the scalar curvature is constant and the number of principal curvatures is at most 6, while S. Maeta and Y.-L. Ou  \cite{Maeta and Ou}
proved it for compact  hypersurfaces with constant scalar curvature. In \cite{ Luo-Maeta, Maeta1}, the authors use new Liouville-type theorems to prove special cases of the Conjecture.

This paper is a contribution to this  Conjecture and  gives rigidity results based on a new technique of  unique continuation theorem (UCT). In \cite{Unique Continuation}, a similar approach was used to prove that if a  biharmonic map is harmonic on an open subset, then it must be  harmonic everywhere.

Inspired by this work of V. Branding and C. Oniciuc, and relying on the UCT of N. Aronszajn  \cite{Aronszajn}, our objective in this article is not to show minimality, but rather to prove the weaker condition of CMC. Using a gradient inequality between  the norm of $A$ and the mean curvature we show that, for  proper-biharmonic  hypersurfaces (i.e. non-minimal) in a sphere, locally CMC implies globally CMC.

In Section \ref{44}, we exploit this UCT property to prove new rigidity results, and in Theorem ~\ref{M}, use an integral condition involving both the scalar and the mean curvatures to force biharmonic hypersurfaces to be CMC. This extends the main result of S. Maeta and Y.-L. Ou \cite{Maeta and Ou}
to non-constant scalar curvature, while relying on a different technique of proof.

\subsection{Conventions}

Manifolds will be assumed to be connected, oriented and without boundary, but  unless stated explicitly, they are not assumed to be compact or complete.

Throughout this paper all manifolds, metrics and maps are taken to be of class $C^{\infty}$, and we adopt the Einstein summation convention.

Let $\varphi:M^{m} \to N^{n}$ between two Riemannian manifolds, the rough Laplacian acting on  sections of the pullback bundle $\varphi^{-1}(TN)$ is given by 
$$ \Delta^{\varphi}= - \trace (\nabla^{\varphi})^{2}=-\trace(\nabla^{\varphi} \nabla^{\varphi}-\nabla^{\varphi}_{\nabla}),$$
where $\nabla^{\varphi}$ is the pullback  connection, for functions  
 $\Delta  = - \trace \nabla \grad . $

Our convention for the curvature tensor field is 
 $$R(X, Y)=[\nabla_{X}, \nabla_{Y}]-\nabla_{[X, Y]}.$$ 
\section{Preliminaries}\label{K}
Let $\varphi:M^{m}\hookrightarrow \mathbb{S}^{m+1}$ be a hypersurface, which we assume, without loss of generality, to be oriented.
Let $\eta \in C(NM)$ be a globally defined unit normal vector field 
and
$A$ the  shape operator
$$A_{\eta}(X)=-\nabla _{X}^{\mathbb{S}^{m+1}} \eta,$$
where $X\in C(TM)$ and  $\nabla^{\mathbb{S}^{m+1}}$ is the Levi-Civita connection on $\mathbb{S}^{m+1}.$
Then, the mean  curvature is 
$$f=\frac{1}{m} \trace A.$$
In general, $f$ can take positive and negative values.

Let $H=f \eta$ be the mean curvature vector field, 
so that $M$ is  minimal when $H=0.$

The second fundamental form $B\in C(\odot^{2}T^{*}M\otimes NM)$ is
$$\langle B(X,Y),\eta\rangle=\langle A(X),Y\rangle,$$
and
\begin{eqnarray*}
\tau(\varphi)=\trace B
= mf \, \eta .
\end{eqnarray*}

It is known that \cite{ Unique Continuation,  MONI}, for a proper-biharmonic map $\varphi:M\to N$, the subset $\{ p\in M : \tau(\varphi)(p)\neq 0\}$ is open and dense in $M$.
Thus, 
$$\Omega=\{ p \in M : f(p)\neq 0\}  $$ 
is open and dense in $M$. 
Note that this subset can have several connected components.
\begin{lemma}(J.-H.~Chen \cite{Chen})
\label{lemma I}
Let $ \varphi :M^{m}\hookrightarrow \mathbb{S}^{m+1}$ be a proper-biharmonic  hypersurface.  Then, at points where $\grad f \neq 0$, we have
$$ \vert A \vert ^{2} \geq \frac{m^{2}(m+8)}{4(m-1)}f^{2}.$$
\end{lemma}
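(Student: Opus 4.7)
The plan is to exploit the second component of the biharmonic system, namely $A(\grad f) = -\frac{m}{2} f \grad f$, which at a point where $\grad f \neq 0$ forces $-\frac{m}{2}f$ to be a principal curvature of $A$, with $\grad f$ an associated eigenvector. Call this principal curvature $\lambda_1 = -\frac{m}{2}f$ and let $\lambda_2, \dots, \lambda_m$ denote the remaining principal curvatures of $A$ at the point.

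Next, I would use the defining relation $\trace A = mf$ to extract information about the remaining eigenvalues. Subtracting off $\lambda_1$ gives
$$\sum_{i=2}^{m}\lambda_i = mf - \lambda_1 = mf + \frac{m}{2}f = \frac{3m}{2}f.$$
Applying the Cauchy--Schwarz inequality (equivalently, the fact that the arithmetic mean of the squares dominates the square of the arithmetic mean) to the $m-1$ remaining principal curvatures yields
$$\sum_{i=2}^{m}\lambda_i^2 \;\geq\; \frac{1}{m-1}\Bigl(\sum_{i=2}^{m}\lambda_i\Bigr)^{2} = \frac{9m^2}{4(m-1)}f^{2}.$$

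Finally, since $|A|^{2}=\sum_{i=1}^{m}\lambda_i^{2}$, I would combine the contribution of the distinguished eigenvalue $\lambda_1$ with the lower bound above:
$$|A|^{2} \;\geq\; \frac{m^{2}}{4}f^{2} + \frac{9m^{2}}{4(m-1)}f^{2} = \frac{m^{2}(m-1)+9m^{2}}{4(m-1)}f^{2} = \frac{m^{2}(m+8)}{4(m-1)}f^{2},$$
which is the claimed inequality.

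There is no genuine obstacle here: the entire argument reduces to recognizing that the tangential part of the biharmonic equation pins one principal curvature exactly, after which the trace identity plus Cauchy--Schwarz on the remaining $m-1$ principal curvatures immediately gives the bound. The only point requiring care is that the argument is purely pointwise and relies on the hypothesis $\grad f \neq 0$ to guarantee that $-\frac{m}{2}f$ actually appears as a principal curvature; without that hypothesis the eigenvalue information from the second biharmonic equation is vacuous.
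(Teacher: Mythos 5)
Your proof is correct, and it is exactly the standard argument behind J.-H.~Chen's inequality, which the paper simply cites rather than reproves (indeed the authors later remark that the inequality ``is based on the Cauchy--Schwarz Inequality applied to the principal curvatures''). The key steps --- using $A(\grad f)=-\tfrac{m}{2}f\grad f$ to pin one eigenvalue at points where $\grad f\neq 0$, then applying Cauchy--Schwarz to the remaining $m-1$ principal curvatures via the trace identity --- match the intended proof.
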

In low dimensions Lemma {\ref{lemma I}} has the following direct consequence.
\begin{proposition}\label{P}
Let $ \varphi :M^{m}\hookrightarrow \mathbb{S}^{m+1}$ be a proper-biharmonic  hypersurface. Assume that $m=3$ or $m=4$, $\Scal ^{M}>m(m-1)$,
then M has constant mean curvature.
\end{proposition}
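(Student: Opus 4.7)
The plan is to reduce the statement to a direct algebraic incompatibility between the hypothesis $\Scal^M>m(m-1)$ and the gradient estimate of Lemma \ref{lemma I}, via the Gauss equation.

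First, for a hypersurface $\varphi:M^m\hookrightarrow \mathbb{S}^{m+1}$ the Gauss equation reads
$$\Scal^{M}=m(m-1)+m^{2}f^{2}-\vert A\vert^{2},$$
so the assumption $\Scal^{M}>m(m-1)$ is equivalent to the pointwise inequality $\vert A\vert^{2}<m^{2}f^{2}$ on $M$. I would argue by contradiction: suppose $f$ is not constant on $M$. Since $M$ is connected, the open set $U=\{p\in M:\grad f(p)\neq 0\}$ is non-empty, and on $U$ one may apply Lemma \ref{lemma I} to get
$$\vert A\vert^{2}\geq \frac{m^{2}(m+8)}{4(m-1)}\,f^{2}.$$

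Combining the two inequalities on $U$ yields
$$\frac{m^{2}(m+8)}{4(m-1)}\,f^{2}\leq \vert A\vert^{2}<m^{2}f^{2},$$
hence $\frac{m+8}{4(m-1)}<1$ at points of $U$ where $f\neq 0$ (and points of $U$ lie in the dense open set $\Omega$, so $f\neq 0$ there). Plugging in the two cases: for $m=3$ the left-hand coefficient is $11/8>1$, and for $m=4$ it equals $1$, so the strict inequality fails in both cases. This contradiction shows $f$ must be constant.

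The only genuine step that needs care is ensuring the inequalities are used on a common open set where both the Gauss-equation consequence $\vert A\vert^{2}<m^{2}f^{2}$ holds (automatic from the hypothesis) and Lemma \ref{lemma I} applies (the locus $\grad f\neq 0$ within $\Omega$); since $\Omega$ is open and dense and $U\subset\Omega$ whenever $f$ is non-constant in a neighbourhood of a point where $\grad f\neq 0$, this poses no real obstacle. No additional biharmonic analysis beyond Lemma \ref{lemma I} is needed.
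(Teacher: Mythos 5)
Your proposal is correct and follows essentially the same route as the paper: argue by contradiction at a point where $\grad f\neq 0$, combine J.-H.~Chen's estimate from Lemma~\ref{lemma I} with the traced Gauss equation $\vert A\vert^{2}=m(m-1)+m^{2}f^{2}-\Scal^{M}$, and observe that $\tfrac{m+8}{4(m-1)}\geq 1$ for $m=3,4$ makes the chain $\tfrac{m^{2}(m+8)}{4(m-1)}f^{2}\leq\vert A\vert^{2}<m^{2}f^{2}$ impossible. Your worry about $f\neq 0$ is in fact moot, since $\Scal^{M}>m(m-1)$ already forces $m^{2}f^{2}>\vert A\vert^{2}\geq 0$ everywhere.
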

\begin{proof}
Assume that $M$ does not have constant mean curvature, then there exists $p_{0} \in M$ such that $(\grad f) (p_{0})\neq 0$.
By Lemma \ref{lemma I},  we have $$\vert A (p_{0})\vert ^{2}\geq \frac{m^{2}(m+8)}{4(m-1)}f^{2}(p_{0}).$$
On the other hand, taking traces in the Gauss Equation (see for example \cite{Docarmo}), we have 
\begin{eqnarray*}
\Scal^{\mathbb{S}^{m+1}} &=& \Scal^{M}+\vert A \vert^{2}-m^{2}f^{2}+ 2 \Ricci^{\mathbb{S}^{m+1}}(\eta ,\eta),
\end{eqnarray*}
where $\eta$ is the unit normal vector field. Thus we have
$$\vert A \vert ^{2}=m(m-1)+m^{2}f^{2}-\Scal^{M},$$ as $\Scal^{M}>m(m-1)$ we obtain 
$$\frac{m^{2}(m+8)}{4(m-1)}f^{2}(p_{0}) \leq \vert A (p_{0})\vert ^{2}< m^{2}f^{2}(p_{0})$$
which forces $m=3$ or $m=4$.

\end{proof}
The conjecture says that any proper-biharmonic hypersurface in $\mathbb{S}^{m+1}$ has constant mean curvature. When $M$ is compact, this conjecture was proved in several cases, under additional hypotheses. When $M$ is not compact, and the additional hypotheses are still satisfied, we only can say that the points where $\grad f \neq 0$, if they exist, cannot form a set with a simple structure.
We present here only one result of this type.
\begin{proposition}
Let $ \varphi :M^{m}\hookrightarrow \mathbb{S}^{m+1}$ be a proper-biharmonic  hypersurface. Assume that $M$ does not have constant mean curvature. If $\vert A \vert^{2} \geq m$, or $\vert A \vert ^{2} \leq m$, then  $W=\{ p \in M:(\grad f )(p)\neq 0\}$ cannot have a connected component $W_{0}$ with the following properties:
\begin{enumerate}
\item $\overline{W_{0}}^{M}$ is compact;
\item the boundary of $W_{0}$ in $M$ is a regular (not necessarily connected) hypersurface of $M$;
\item there exists an open subset $U$ of $M$ such that $\overline{W_{0}}^{M}\subset U$ and $\grad f =0 $ on $U \backslash W_{0}$.
\end{enumerate}
\end{proposition}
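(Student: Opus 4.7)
The plan is to use conditions (1)--(3) to perform integration by parts on $W_0$ with no surviving boundary contributions, extract a basic integral identity from the tangential biharmonic equation, and then split into the two cases. The first case will fall immediately from a sign comparison, while the second case will require a Bochner-type refinement combined with Lemma~\ref{lemma I}.

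By (3), $\grad f \equiv 0$ on $\partial W_0$, and by (1)--(2), $\overline{W_0}$ is a compact manifold with smooth boundary; in particular every boundary integral containing $\grad f$ as a factor will vanish. Multiplying $\Delta f = (m - |A|^2) f$ by $f$ and integrating over $W_0$ yields
$$\int_{W_0} |\grad f|^2 \, dv = \int_{W_0} (m - |A|^2) f^2 \, dv, \qquad (\star)$$
whose left-hand side is strictly positive since $|\grad f| > 0$ on the non-empty open set $W_0$. In the case $|A|^2 \geq m$ the right-hand side of $(\star)$ is non-positive, contradicting the strict positivity of the left-hand side.

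The case $|A|^2 \leq m$ is the main obstacle: here both sides of $(\star)$ are non-negative, so $(\star)$ alone cannot close the argument. The plan is to integrate Bochner's formula for $|\grad f|^2$ on $W_0$ (again no boundary term survives, since $\grad f$, $\grad |\grad f|^2 = 2 \Hess f(\grad f, \cdot)^{\sharp}$ and $\grad (f^2) = 2 f \grad f$ all vanish on $\partial W_0$). Substituting $\Delta f = (m - |A|^2) f$, computing $\Ricci^M(\grad f, \grad f) = \bigl[(m-1) - \tfrac{3m^2}{4} f^2\bigr]|\grad f|^2$ from the Gauss equation together with the tangential biharmonic equation $A(\grad f) = -\tfrac{m}{2} f \grad f$, invoking Newton's inequality $|\Hess f|^2 \geq (\Delta f)^2 / m$, and using the auxiliary identity $3 \int_{W_0} f^2 |\grad f|^2 \, dv = \int_{W_0} (m - |A|^2) f^4 \, dv$ (obtained by integrating $f^3 \Delta f$ against $(\star)$'s boundary-free formula), I expect to arrive at
$$\int_{W_0} (m - |A|^2) f^2 \bigl[m^3 f^2 - 4(m-1)|A|^2\bigr] \, dv \geq 0.$$

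The conclusion then comes from Lemma~\ref{lemma I}: on $W_0$ this gives $4(m-1)|A|^2 \geq m^2(m+8) f^2$, forcing the bracket to satisfy $m^3 f^2 - 4(m-1)|A|^2 \leq -8 m^2 f^2 \leq 0$, while the outer factor $(m - |A|^2)$ is non-negative by hypothesis. The integrand is therefore pointwise non-positive; combined with the non-negativity of its integral it must vanish almost everywhere, and the strict inequality in Lemma~\ref{lemma I} rules out the bracket vanishing wherever $f \neq 0$. Hence $(m - |A|^2) f^2 \equiv 0$ on $W_0$ by continuity, and substituting back into $(\star)$ gives $\int_{W_0} |\grad f|^2 = 0$, contradicting $W_0 \subset W$. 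The delicate step is thus the second case, where the interplay between the Bochner estimate and the sharp gradient bound of Lemma~\ref{lemma I} is what converts the integral inequality into pointwise vanishing.
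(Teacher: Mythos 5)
Your argument is correct, and for the difficult case $\vert A\vert^{2}\leq m$ it takes a genuinely different route from the paper. For $\vert A\vert^{2}\geq m$ the two proofs coincide: your identity $\int_{W_0}\vert\grad f\vert^{2}=\int_{W_0}(m-\vert A\vert^{2})f^{2}$ is exactly what the paper gets by integrating $\Delta f^{2}$ over $\overline{W_{0}}^{M}$. For $\vert A\vert^{2}\leq m$ the paper does not run the Bochner argument itself: it quotes the divergence-form inequality $-\Div Z\leq\frac{8(m-1)}{m(m+8)}(\vert A\vert^{2}-m)\vert A\vert^{2}f^{2}$ from J.-H.~Chen and \cite[Inequality (1.74)]{Thesis}, shows $Z=0$ on $\partial W_{0}$ (after first analysing $\Int(U\setminus W_{0})$), integrates to get $(\vert A\vert^{2}-m)\vert A\vert^{2}f^{2}=0$ on $\overline{W_{0}}^{M}$, and again cites \cite{Thesis} to conclude $\vert A\vert^{2}=m$ and finally $\grad f=0$. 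You instead re-derive the needed integral inequality from first principles: Bochner plus $\vert\nabla df\vert^{2}\geq(\Delta f)^{2}/m$, the computation $\Ricci^{M}(\grad f,\grad f)=\bigl(m-1-\tfrac{3m^{2}}{4}f^{2}\bigr)\vert\grad f\vert^{2}$ (valid everywhere since it only uses the Gauss equation and $A(\grad f)=-\tfrac{m}{2}f\grad f$), and the auxiliary identity $3\int_{W_0}f^{2}\vert\grad f\vert^{2}=\int_{W_0}(m-\vert A\vert^{2})f^{4}$; I checked that after writing $(\Delta f)^{2}=m(m-\vert A\vert^{2})f^{2}-(m-\vert A\vert^{2})\vert A\vert^{2}f^{2}$ and cancelling the $(m-1)\int\vert\grad f\vert^{2}$ terms one lands exactly on your inequality $\int_{W_0}(m-\vert A\vert^{2})f^{2}[m^{3}f^{2}-4(m-1)\vert A\vert^{2}]\,dv_{g}\geq 0$. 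Lemma~\ref{lemma I} then applies pointwise on all of $W_{0}$ (where $\grad f\neq 0$ by definition), and since $m^{3}<m^{2}(m+8)$ the bracket is $\leq-8m^{2}f^{2}$, so even the non-strict form of the Lemma suffices --- your parenthetical appeal to a ``strict inequality in the Lemma'' is unnecessary, as your own estimate $m^{3}f^{2}-4(m-1)\vert A\vert^{2}\leq-8m^{2}f^{2}$ already makes the bracket strictly negative wherever $f\neq 0$. The trade-off: your version is self-contained where the paper leans twice on external results, and it dispenses with the paper's preliminary structure analysis of $\Int(U\setminus W_{0})$ (all you need is $\grad f=0$ on $\partial W_{0}\subset U\setminus W_{0}$ to kill the boundary terms); the paper's version is shorter because the heavy lifting is outsourced to the cited inequality.
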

\begin{proof}
Assume that $W$ has a connected component $W_{0}$ with the above properties and we argue by contradiction.

Since $\partial W_{0}$ is a regular hypersurface of $M$, we have 
$$ \Int (U\backslash W_{0})=U\backslash \overline{W_{0}}^{U}=U\backslash \overline{W_{0}}^{M} =U\backslash (W_{0} \cup \partial W_{0}) \neq \emptyset,$$
otherwise $U=W_{0} \cup \partial W_{0}$ is closed in $M$, so $U=M$, i.e. $M=W_{0}\cup \partial W_{0}$ is a manifold with  boundary; and 
$$ \overline{\Int (U\backslash W_{0})}^{U}=U\backslash W_{0}.$$
On $\Int (U \backslash W_{0})$, that may have several connected components, $\grad f =0$, $f\neq0$ and $\vert A \vert ^{2}=m.$

Assume now that $\vert A \vert ^{2}\leq m.$
It was proved in \cite{Chen}, see also \cite[Inequality (1.74)]{Thesis} that on $M$ we have
$$
\frac{1}{2} \Delta (\vert \grad f \vert ^{2}+ \frac{m^{2}}{8}f^{4}+f^{2})+\frac{1}{2} \Div(\vert A \vert ^{2}\grad f ^{2}) \leq \frac{8(m-1)}{m(m+8)}(\vert A \vert ^{2}-m)\vert A \vert ^{2}f^{2}.
$$
Equivalently, 
\begin{align}\label{Z}
-\Div Z\leq\frac{8(m-1)}{m(m+8)}(\vert A \vert ^{2}-m)\vert A \vert ^{2}f^{2}\leq 0,
\end{align}
where
\begin{align*}
Z=\frac{1}{2}\grad (\vert \grad f \vert ^{2}+\frac{m^{2}}{8}f^{4}+f^{2})-\frac{1}{2}\vert A \vert ^{2}\grad f ^{2}.
\end{align*}
Since $Z=0$ on $\Int (U\backslash W_{0}),$ it follows that $Z=0$ on $U\backslash W_{0}$ and so on $\partial W_{0}.$
Integrating  Inequality (\ref{Z}) on $\overline{W_{0}}^{M}$ and using the Divergence Theorem, as $Z=0$ on $\partial W_{0}$, we obtain $(\vert A \vert ^{2}-m)\vert A \vert ^{2}f^{2}=0$ on $\overline{W_{0}}^{M}.$ As in \cite{Thesis}, we obtain $\vert A \vert ^{2}=m $ on $W_{0}$, and so on $\overline{W_{0}}^{M}$. It follows that $\Delta f=0$ on $\overline{W_{0}}^{M}$.

Furthermore, we integrate $\Delta f^{2}$ on $\overline{W_{0}}^{M}$, and since $\grad f ^{2}=0$ on $\partial W_{0}$, we obtain $\grad f =0$ on $\overline{W_{0}}^{M}$ which is impossible.

The case $\vert A \vert ^{2}\geq m $ is easy to prove as 
\begin{align*}
\frac{1}{2}\Delta f^{2}=(m-\vert A \vert ^{2})f^{2}-\vert \grad f \vert ^{2}\leq 0
\end{align*}
on $M$, and integrating on $\overline{W_{0}}^{M}$ we obtain again $\grad f =0$ on $\overline{W_{0}}^{M}$.

\end{proof}
In the next section (see Corollary \ref{corollary1}) we will see that under  a stronger hypothesis, i.e. $\vert A \vert ^{2}$ is constant, the points of a   non-CMC  proper-biharmonic hypersurface where $\grad f \neq 0$ form an open dense subset of $M$.

Before stating the last result of this section, we need to recall some well-known facts about the smoothness of the principal curvatures.

Let $ \varphi :M^{m}\hookrightarrow \mathbb{S}^{m+1}$ be a hypersurface with  $\lambda_{1}\geq \lambda_{2}\geq \cdots \geq \lambda_{m}$ its principal curvatures, i.e. the eigenvalue functions of the shape operator $A$. The functions $\lambda_{i}$ are continuous on M for all $i=1,\ldots,m$. The set of points where the numbers of  distinct principal curvatures is locally constant is a set $M_{A}$ that is open and dense in $M$. On a non-empty connected component of $M_{A}$, which is open in $M_{A}$, and so in $M$, the number of distinct principal curvatures is constant. Thus, the multiplicities of the principal curvatures are constant, and so, on that connected component, $\lambda_{i}$'s are smooth and $A$ is (smoothly) locally diagonalizable (see \cite{Nomizu,Ryan1, Ryan2}).
\begin{proposition}\label{PR}
Let $ \varphi :M^{m}\hookrightarrow \mathbb{S}^{m+1}$ be a proper-biharmonic hypersurface. Assume that at any point of $M$ the multiplicity of distinct principal curvatures is at least $2$. Then $M$ has constant mean curvature.
\end{proposition}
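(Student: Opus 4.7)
The plan is to argue by contradiction: suppose $f$ is not constant, so the open set $W = \{p \in M : (\grad f)(p) \neq 0\}$ is non-empty. Recall from the discussion preceding the statement that the subset $M_A \subset M$, on which the number of distinct principal curvatures of $A$ is locally constant, is open and dense in $M$; moreover, on each connected component of $M_A$ the principal curvatures are smooth functions and the eigen-distributions of $A$ are smooth subbundles. Since $W$ is open and $M_A$ is dense, $W \cap M_A$ is non-empty, and I would carry out the argument on a local neighbourhood inside it.

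The tangential part $A(\grad f) = -\tfrac{m}{2} f \grad f$ of the biharmonic equation shows that at each point of $W$ the unit vector $E_1 := \grad f/|\grad f|$ is a principal direction with associated principal curvature $\lambda := -\tfrac{m}{2} f$. By the standing multiplicity hypothesis, the eigenspace $\ker(A - \lambda \id)$ has dimension at least $2$, so on $W \cap M_A$ I can select a smooth local unit vector field $X$ satisfying
\[
A(X) = \lambda X, \qquad \langle X, E_1 \rangle = 0.
\]
In particular $X(f) = \langle \grad f, X \rangle = 0$, and hence $X(\lambda) = 0$.

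The key step is to apply the Codazzi equation $(\nabla_{E_1} A)(X) = (\nabla_X A)(E_1)$, which holds because $\mathbb{S}^{m+1}$ is a space form. Expanding both sides, taking the inner product with $X$, and using the symmetry of $A$, the constraint $|X|=1$ (so that $\langle \nabla_{E_1} X, X \rangle = 0$), and the vanishing of $X(\lambda)$, every term involving covariant derivatives of $E_1$ or $X$ cancels. What survives is simply $E_1(\lambda) = 0$, i.e.\ $E_1(f) = 0$. But by construction $E_1(f) = |\grad f| > 0$ on $W$, a contradiction. Hence $W = \emptyset$ and $f$ is constant.

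The main obstacle is not the algebra, which is a single Codazzi identity, but the regularity bookkeeping: one must know that the companion eigenvector $X$ can be chosen \emph{smoothly} with the required orthogonality and eigen-equation. This is exactly what the restriction to $W \cap M_A$ provides, via the structure of $M_A$ recalled immediately before the statement; once that is secured, the rest of the proof is a short computation.
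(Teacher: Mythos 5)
Your proof is correct and follows essentially the same route as the paper: both identify $E_1=\grad f/|\grad f|$ as a principal direction for $\lambda=-\tfrac{m}{2}f$ on $W\cap M_A$, use the multiplicity hypothesis to produce a second smooth unit eigenvector for the same eigenvalue, and extract $E_1(f)=0$ from the Codazzi equation to reach a contradiction. The only cosmetic difference is that you contract Codazzi directly with the companion eigenvector $X$, whereas the paper writes out the full frame decomposition with connection forms and then takes the $E_2$-component.
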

\begin{proof}
Assume that $M$ is not CMC and denote $$W:=\{ p\in M: (\grad f )(p)\neq 0\}.$$
Clearly, $W$ is a non-empty open subset of $M$.
Since $M_{A}$ is dense, $W\cap M_{A}\neq \emptyset$, and so $W$ intersects a connected component of $M_{A}$. On that intersection, $\lambda _{i}$'s are smooth, $i=1,\ldots ,m,$ and $A$ is smoothly diagonalizable, i.e. $A(E_{i})=\lambda _{i} E_{i}$, $i=1, \ldots , m, $ where $\{ E_{i}\}_{i=1}^{m}$ is an orthonormal frame field.

From the hypothesis, we can assume for simplicity  that $\lambda_{1}=\lambda_{2}=-\frac{m}{2}f$ and $E_{1}=\frac{\grad f}{\vert \grad f \vert}.$
Since $\langle E_{a}, E_{1}\rangle=0$, we have 
\begin{eqnarray}\label{C}
E_{a}f=0, \  \  a=2,\ldots, m.
\end{eqnarray}
Now, we use the connection equations with respect to the frame field $\{ E_{i} \}_{i=1}^{m}$,
$$ \nabla_{E_{i}} E_{j}=\omega _{j}^{k}(E_{i})E_{k},$$
and we rewrite the Codazzi equation
$$ (\nabla_{E_{i}}A)(E_{j})=(\nabla_{E_{j}}A)(E_{i})$$
as
\begin{align}\label{E1}
(E_{i}\lambda_{j})E_{j}+\sum_{k=1}^{m}
(\lambda_{j}-\lambda_{k})\omega_{j}^{k}
(E_{i})E_{k}=
(E_{j}\lambda_{i})E_{i}+\sum_{k=1}^{m}(\lambda_{i}-\lambda_{k})\omega _{i}^{k}(E_{j})E_{k}.
\end{align}
For $i=1$ and $j=2$ we obtain
\begin{align}
(E_{1}\lambda_{2})E_{2}+\sum_{k=1}^{m}
(\lambda_{2}-\lambda_{k})\omega_{2}^{k}
(E_{1})E_{k}&=
(E_{2}\lambda_{1})E_{1}+\sum_{k=1}^{m}(\lambda_{1}-\lambda_{k})\omega _{1}^{k}(E_{2})E_{k}.\nonumber
\\
&= \sum_{k=1}^{m}(\lambda_{1}-\lambda_{k})\omega_{1}^{k}(E_{2})E_{k}.\label{propeq}
\end{align}
Furthermore, we take the scalar product of the above relation with $E_{2}$, and we obtain 
$$ E_{1}\lambda_{2}=E_{1}\lambda_{1}=0,$$
i.e. $E_{1}f=0.$ Thus, from Equation (\ref{C}) we conclude that $\grad f=0$ which is impossible.

\end{proof}

\section{The unique continuation theorem}
Very little is known on the local properties, in particular
analytical ones, of biharmonic submanifolds in Euclidean spheres.

An essential tool in the analysis of PDE's is a unique continuation
property, which we establish in Theorem~ \ref{theorem1} under a global condition on
the gradients of the norm of the shape operator and mean curvature.

The objective here departs from \cite{Unique Continuation} as the conclusion is
that the manifold has constant mean curvature, instead of the stronger
condition of minimality, but the method is similar and is based on
Aronszajn's unique continuation theorem of 1957 \cite{Aronszajn}.

In Corollaries ~\ref{corollary1} and \ref{corollary2}, the main hypothesis of Theorem ~ \ref{M} is replaced
by more geometrical constraints and allows to extend known results from
the compact to the non-compact cases.
\begin{theorem}\label{theorem1}
Let $ \varphi :M^{m}\hookrightarrow \mathbb{S}^{m+1}$ be a proper-biharmonic  hypersurface. Assume that  there exists a non-negative function h on $M$ such that  $ \vert \grad  \vert A\vert ^{2}\vert\leq h \  \vert \grad  f\vert$ on \textit{M}. If $ \grad  f$ vanishes on a non-empty open connected subset of \textit{M}, then $M$ has constant mean curvature.
\end{theorem}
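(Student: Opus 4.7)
The plan is to apply Aronszajn's unique continuation theorem \cite{Aronszajn} directly to the vector field $\grad f$. Since $M$ is connected, it will suffice to establish a second-order elliptic inequality of Aronszajn type for $\grad f$ with locally bounded coefficients; then the vanishing of $\grad f$ on a non-empty open subset forces $\grad f\equiv 0$ on $M$, i.e.\ $f$ is constant, which is precisely the CMC conclusion.

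The main computation converts the scalar biharmonic equation $\Delta f=(m-|A|^2)f$ into a vector-valued elliptic inequality. Differentiating yields
\[
\grad\Delta f=(m-|A|^2)\,\grad f-f\,\grad|A|^2,
\]
and the Weitzenb\"ock/Bochner identity for the one-form $df$ relates the rough (connection) Laplacian of $\grad f$, which I denote $\Delta_{\mathrm{rough}}$, to $\grad\Delta f$ plus a Ricci correction acting on $\grad f$. Substituting gives, up to sign conventions,
\[
\Delta_{\mathrm{rough}}(\grad f)=(m-|A|^2)\,\grad f-f\,\grad|A|^2\pm\Ricci^M(\grad f).
\]
Taking pointwise norms and using the hypothesis $|\grad|A|^2|\leq h\,|\grad f|$, together with the smoothness (hence local boundedness) of $|A|^2$, $f$ and $\Ricci^M$, one obtains a pointwise bound
\[
|\Delta_{\mathrm{rough}}(\grad f)|\leq C(x)\,|\grad f|
\]
where $C(x):=|m-|A(x)|^2|+|f(x)|\,h(x)+|\Ricci^M_x|$ is locally bounded on $M$.

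This is exactly the shape of elliptic inequality covered by Aronszajn's unique continuation theorem: applying it on the connected manifold $M$ with the hypothesis that $\grad f$ vanishes on a non-empty open subset yields $\grad f\equiv 0$ on $M$, and the proof is complete. The main obstacle I anticipate is technical rather than conceptual: Aronszajn's classical theorem is stated for scalar equations, whereas $\grad f$ is a section of $TM$. One resolves this either by writing the inequality component-wise in local coordinates, producing a coupled elliptic system in the $\partial_i f$ to which Aronszajn's theorem still applies, or by invoking the vector-valued version of the UCT used in \cite{Unique Continuation}. A secondary minor point is that one must assume $h$ is locally bounded so that $C(x)$ meets Aronszajn's hypotheses; this is implicit in the paper's smooth conventions.
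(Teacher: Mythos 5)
Your proposal is correct and follows essentially the same route as the paper: both combine the differentiated biharmonic equation $\Delta f=(m-|A|^{2})f$, the Weitzenb\"ock formula for $df$, and the hypothesis $|\grad |A|^{2}|\le h\,|\grad f|$ to produce an Aronszajn-type second-order inequality for $\grad f$ with (locally) bounded coefficients, and then invoke unique continuation. The only steps the paper makes explicit that you leave as remarks are (i) the open-and-closed chaining argument (via the set $\Int A_{0}$ and the assumption $\partial(\Int A_{0})\neq\emptyset$) needed to pass from the local statement of Aronszajn's theorem on a chart with compact closure to the connected, possibly non-compact manifold $M$, and (ii) the reduction of the vector field $\grad f$ to the globally defined scalar components $u^{\alpha}$ of $d(i\circ\varphi)(\grad f)$ in the ambient $\mathbb{R}^{m+2}$, which is the paper's concrete realization of the ``component-wise system'' you describe.
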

\begin{proof}
Denote by $V$ the non-empty open connected subset of $M$ where $\grad f=0.$
\\
Consider the subset
$$A_{0}:=\{p \in M: ( \grad f)(p)=0\}.
$$
It is clear that $A_{0}$ is closed, $\Int A_{0}\neq \emptyset$ and $\Int A_{0}$ may have several connected components.
Indeed,
$A_{0}=( \grad  f)^{-1}(\{0\})$ where $\{0\}$ is closed in $TM$ as being the zero section, and $ \grad f :M \to TM$ is continuous, thus $A_{0}$ is closed, and since $V$  is a non-empty open 
subset of $ A_{0}$, we obtain $\Int A_{0}$ is also non-empty.

Assume that $ \partial  ( \Int  A_{0})= \emptyset. $
Then
\begin{eqnarray*}
\emptyset  &=& \partial ( \Int  A_{0})
\\
&=& \overline{( \Int A_{0})}^{M} \cap \  \overline{(M  \backslash \Int A_{0})}^{M}\\&=&\overline{( \Int A_{0})}^{M} \cap \  (M\backslash \Int A_{0}).
\end{eqnarray*}
Now, as $\emptyset  =\overline{( \Int A_{0})}^{M} \cap \  (M  \backslash \Int A_{0})$, we obtain $ \overline{( \Int A_{0})}^{M} \subset \Int  A_{0}$ which implies that $\overline{( \Int A_{0})}^{M}= \Int  A_{0} 
$, thus $\Int {A_{0}}$ is closed in \textit{M}. But $\Int  A_{0}$ is  non-empty open and \textit{M} is connected, 
we conclude that
$ \Int  A_{0} = M $, so $ \Int  A_{0} = A_{0} =M$ and $\grad  f =0$ on \textit{M}.

Assume now that $\partial ( \Int A_{0}) \neq \emptyset$, we will obtain a contradiction. Let $p_{0}\in \partial( \Int  A_{0} )$,  $\partial ( \Int  A_{0} )=\overline{( \Int  A_{0} )}^{M}\backslash \Int A_{0}$, necessarily $p_{0} \notin \Int A_{0}$.
Let \textit{U} be an open subset containing $p_{0}$,  then $ U \cap \Int  A_{0} \neq \emptyset$.

On the other hand, we have
$$ p_{0} \in \partial ( \Int A_{0}) \subset \partial A_{0} ,$$
so 
$$ p_{0} \in \partial A_{0} = \partial (M  \backslash  A_{0}).$$
Since $A_{0}$ is closed in \textit{M},  then
 $ M   \backslash  A_{0}$ is non-empty open in \textit{M}, and so $ p_{0} \notin M \backslash A_{0}.$
Of course, $p_{0} \in \overline{(M \backslash  A_{0})}^{M} $ implies that $ U \cap (M  \backslash   A_{0}) \neq \emptyset. $

In conclusion:
\begin{enumerate}
\item $U \cap  \Int  A_{0}$ is a non-empty open subset of $\Int A_{0}$ that does not contain $p_{0}$, so there exists a non-empty open subset on which $\grad  f=0$.
\item $ U   \cap (M  \backslash   A_{0}) $ is  a non-empty open subset that  does not contain $p_{0},$  and  is  included  in $ M  \backslash  A_{0} $, so there exists a non-empty open  subset on which  $\grad  f \neq 0 $ at any point.
\end{enumerate}
Let $(U,x^{i})_{i=1,\dots, m}$ be   a  local  chart  on  $M$ around  $p_{0}\in \partial( \Int A_{0}).$
Consider an open connected  subset $ D$ in $M$ containing $ p_{0}$, such that  $\overline{D}^{M}$  is  compact   and $ \overline{D}^{M}\subset U$.
Note that $D$ also contains a  non-empty open subset where $ \grad f =0$ everywhere, and a non-empty open subset where $ \grad f \neq 0$ at any point.

As usual, we identify $\grad f \in C(TM)$ with $d\varphi(\grad f)\in C(\varphi^{-1} T\mathbb{S}^{m+1})$, or $$d(i\circ \varphi)(\grad f )\in C((i\circ \varphi)^{-1}T\mathbb{R}^{m+2}),$$ where $i: \mathbb{S}^{m+1}\hookrightarrow \mathbb{R}^{m+2}$ is the canonical inclusion.
Let us write
$ \grad  f =u^{\alpha}e_{\alpha}$,  where $u^{\alpha} \in C^{\infty}(M),  \ \forall \alpha=1, \dots, m+2 $, and 
$\{e_{\alpha}\}_{ \alpha =1}^{m+2}$ is the canonical basis in $ \mathbb{R}^{m+2} $.  
For all $\alpha =1,\dots , m+2 $, the function $u^{\alpha}$ vanishes on $V$.

As $\varphi$ is biharmonic, we have
$$ \Delta f = (m-|A|^{2})f, $$
and taking its differential we obtain
\begin{eqnarray}\label{**}
d  \Delta f = (m-|A|^{2})df-fd(|A|^{2}),
\end{eqnarray}
hence, by the musical isomorphism:
$$(d \Delta f)^{\sharp}= [(m-|A|^{2})df-fd(|A|^{2})]^{\sharp},$$
Since
$(df)^{\sharp}= \grad  f$
 and $ d \Delta ^{\mathrm{Hodge}}= \Delta^\mathrm{{Hodge}}d,$
we can rewrite Equation (\ref{**}) as
\\
$$\Big( \Delta ^{\mathrm{Hodge}}( d f)\Big)^{\sharp}=(m-|A|^{2})\grad f -f \grad  |A|^{2}.$$
On the other hand, by the Weitzenbock formula 
$$ \Big( \Delta ^{\mathrm{Hodge}}( d f )\Big)^{\sharp}=- \trace \ \nabla ^{2} \grad f + \Ricci^{M}( \grad  f), $$
thus 
\begin{align}
-\trace \ \nabla ^{2} \grad f= -\Ricci^{M}( \grad  f)+(m-|A|^{2}) \ \grad f-f \ \grad  |A|^{2}.\label{1}
\end{align}
As  $ \Ricci^{M}( \grad   f)= \Ricci^{M}(u^{\alpha}e_{\alpha})$
and
$$ \grad f= u^{\alpha}e_{\alpha}=u^{\alpha}(e_{\alpha}^{\perp}+e_{\alpha}^{T})=u^{\alpha}e_{\alpha}^{T},$$
where $e_{\alpha}^{\perp}$ and $e_{\alpha}^{T}$ are the normal and the tangential components to $M$ of $e_{\alpha}$ in $\mathbb{R}^{m+2}$ respectively,  we obtain
\begin{align}
\Ricci^{M}( \grad f)= \Ricci^{M}(u^{\alpha}e_{\alpha}^{T})=u^{\alpha}\Ricci^{M}(e_{\alpha}^{T}).\label{2}
\end{align}
On $U$, we combine the second fundamental forms of $M$ in  $\mathbb{S}^{m+1}$ and $\mathbb{S}^{m+1}$ in $\mathbb{R}^{m+2}$ to compute
\begin{eqnarray*}
\nabla_{\frac{\partial}{\partial x^{i}}}^{M} \  \grad f &=&\nabla_{\frac{\partial}{\partial x^{i}}}^{\mathbb{S}^{m+1}} \ \grad f - \ B \Big(\frac{\partial}{\partial x^{i}}, \grad f \Big)
\\&=& \nabla_{\frac{\partial}{\partial x^{i}}}^{\mathbb{R}^{m+2}} \ \grad f + \Big\langle \frac{\partial}{\partial x^{i}},u^{\alpha} e_{\alpha}^{T}\Big\rangle r - B\Big(\frac{\partial}{\partial x^{i}}, \grad f \Big)
\\&=&\nabla_{\frac{\partial}{\partial x^{i}}}^{\mathbb{R}^{m+2}}(u^{\alpha}e_{\alpha})+\Big\langle \frac{\partial}{\partial x^{i}}, u^{\alpha}e_{\alpha}^{T}\Big \rangle r - B\Big(\frac{\partial}{\partial x^{i}}, u^{\alpha}e_{\alpha}^{T}\Big)\\
&=& \frac{\partial u^{\alpha}}{\partial x^{i}} \ e_{\alpha} +u^{\alpha }\nabla_{\frac{\partial}{\partial x^{i}}}^{\mathbb{R}^{m+2}} e_{\alpha}+u^{\alpha}\Big\langle  \frac{\partial}{\partial x^{i}},e_{\alpha}^{T}\Big \rangle r - u^{\alpha}B\Big(\frac{\partial}{\partial x^{i}}, e_{\alpha}^{T}\Big)\\
&=&\frac{\partial u^{\alpha}}{\partial x^{i} }\ e_{\alpha} +u^{\alpha} \Big \langle  \frac{\partial}{\partial x^{i}},e_{\alpha}^{T}\Big \rangle r - u^{\alpha}B\Big(\frac{\partial}{\partial x^{i}}, e_{\alpha}^{T}\Big)\\&=& \frac{\partial u^{\alpha}}{\partial x^{i}} \ e_{\alpha}^{T},
\end{eqnarray*} 
\\
where $r$ is the position vector field on $\mathbb{R}^{m+2}$.
Put
$$Y_{i}=\frac{\partial u^{\alpha}}{\partial x^{i}} \ e_{\alpha}^{T}.$$
For our purposes, it is  convenient to write $Y_{i}$ as $$ Y_{i}=\frac{\partial u^{\alpha}}{\partial x^{i}}\ e_{\alpha}+u^{\alpha} Z_{\alpha,i},$$
where 
$$ Z_{\alpha,i}= \Big\langle \frac{\partial}{\partial x^{i}},e_{\alpha}^{T} \Big\rangle r- B\Big(\frac{\partial}{\partial x^{i}},e_{\alpha }^{T}\Big)$$
is a vector field normal to $M$ in $ \mathbb{R}^{m+2}$.

We repeat this process to obtain, on $U$, the second derivatives of $\grad f $,
\begin{align}
\nabla _{\frac{\partial}{\partial x^{i}}}^{M} \nabla _{\frac{\partial}{\partial x^{j}}}^{M} \grad f &= \nabla _\frac{\partial}{\partial x^{i}}^{M  }Y_{j}{\nonumber}
\\&= \nabla _{\frac {\partial}{\partial x^{i}}} ^ {\mathbb{R}^{m+2}} Y_{j} +\Big \langle \frac{\partial}{\partial x^{i}},Y_{j} \Big\rangle r -B\Big(\frac{\partial}{\partial x^{i}},Y_{j}\Big){\nonumber}
\\&= \nabla_{\frac{\partial}{\partial x^{i}}}^{\mathbb{R}^{m+2}} \Big \{ \frac{\partial u^{\alpha}}{\partial x^{j}}
\ e_{\alpha} + u^{\alpha}Z_{\alpha,j}  \Big \}+\Big\langle \frac{\partial}{\partial x^{i}}, Y_{j}\Big\rangle r -B \Big(\frac{\partial}{\partial x^{i}},Y_{j}\Big){\nonumber}
\\
& =\frac{\partial ^{2}u^{\alpha}}{\partial x^{i}\partial x^{j}} \ e_{\alpha}+\frac{\partial u^{\alpha}}{\partial x^{i}}\ Z_{\alpha,j}+ u^{\alpha}\nabla _{\frac{\partial}{\partial x^{i}}}^{\mathbb{R}^{m+2}}  Z_{\alpha,j} +\Big\langle \frac{\partial}{\partial x^{i}},Y_{j}\Big\rangle r \label{3}
\\
 & \quad-B\Big(\frac{\partial}{\partial x^{i}},Y_{j}\Big).\nonumber
\end{align} 
To compute $\nabla_{\nabla^{M}_{\frac{\partial}{\partial x^{i}}}\frac{\partial}{\partial x^{j}}}^{M}  \grad f, $ on $U$, we have
\begin{eqnarray}
\nabla_{\nabla^{M}_{\frac{\partial}{\partial x^{i}}}\frac{\partial}{\partial x^{j}}}^{M}  \grad f &=& \nabla_{\nabla^{M}_{\frac{\partial}{\partial x^{i}}}\frac{\partial}{\partial x^{j}}}^{\mathbb{R}^{m+2} } \grad f +\Big\langle \nabla _\frac{\partial}{\partial x^{i}}^{M}\frac{ \partial}{\partial x^{j}}, \grad f \Big \rangle r -B \Big(\nabla _{\frac{\partial}{\partial x^{i}}}^{M}\frac{\partial}{\partial x^{j}}, \grad f \Big)
{\nonumber} \\&=& \Big[ \Big( \nabla _{\frac{\partial}{\partial x^{i}}}^{M} \frac{\partial}{\partial x^{j}}\Big)u^{\alpha} \Big]e_{\alpha} +u^{\alpha } \Big\langle \nabla _{\frac{\partial}{\partial x^{i}}}^{M}\frac{\partial }{\partial x^{j}}, e_{\alpha}^{T} \Big\rangle r - u^{\alpha}B\Big( \nabla _{\frac{\partial}{\partial x^{i}}}^{M} \frac{\partial}{\partial x^{j}}, e_{\alpha}^{T} \Big)
{\nonumber} \\&=&\Big[ \Big( \nabla _{\frac{\partial}{\partial x^{i}}}^{M} \frac{\partial}{\partial x^{j}}\Big)u^{\alpha} \Big]e_{\alpha}+ u^{\alpha} \  W_{\alpha,ij},\label{4}
\end{eqnarray}
where
$$ W_{\alpha ,ij}= \Big\langle \nabla _{\frac{\partial}{\partial x^{i}}}^{M}\frac{\partial}{\partial x^{j}}, e_{\alpha}^{T}\Big\rangle r -B\Big(  \nabla _{\frac{\partial}{\partial x^{i}}}^{M}\frac{\partial}{\partial x^{j}}, e_{\alpha} ^{T}\Big)$$
is a vector field normal to $M$ in $\mathbb{R}^{m+2}$.

Replacing (\ref{3}) and (\ref{4}) in (\ref{1}), and using (\ref{2}), we obtain
\begin{eqnarray*}
(\Delta u^{\alpha}) \ e_{\alpha}-g^{ij} \ \frac{\partial u^{\alpha}}{\partial x^{i} } \ Z_{\alpha , j}-g^{ij} \Big\langle \frac{\partial}{\partial x^{i}} , Y_{j}\Big\rangle r+g^{ij}B\Big(  \frac{\partial}{\partial x^{i}}, Y_{j}\Big)-g^{ij}u^{\alpha} \nabla _{\frac{\partial}{\partial x^{i}}} ^{\mathbb{R}^{m+2}} Z_{\alpha,j} \nonumber 
\\ +g^{ij}u^{\alpha}\ W_{\alpha,ij}  = -u^{\alpha} \Ricci ^{M}(e_{\alpha}^{T})
 +(m-\vert A \vert ^{2})u^{\alpha}e_{\alpha}^{T} -f \grad \vert A\vert ^{2},
\end{eqnarray*}
so
\begin{eqnarray}
 \quad \quad (\Delta u^{\alpha})\ e_{\alpha} &=& g^{ij} \ \frac{\partial u^{\alpha}}{\partial x^{i} } \ Z_{\alpha , j}+g^{ij} \Big\langle \frac{\partial}{\partial x^{i}} , Y_{j}\Big\rangle r -g^{ij}B\Big(  \frac{\partial}{\partial x^{i}}, Y_{j}\Big)\label{5}
\\
&\ & 
+ \ g^{ij}u^{\alpha} \nabla _{\frac{\partial}{\partial x^{i}}} ^{\mathbb{R}^{m+2}} Z_{\alpha,j} - g^{ij}u^{\alpha}\Big\langle \nabla _{\frac{\partial}{\partial x^{i}}}^{M}\frac{\partial}{\partial x^{j}}, e_{\alpha}^{T}\Big\rangle r \nonumber
\\
& \ &
+g^{ij} u^{\alpha}B\Big(  \nabla _{\frac{\partial}{\partial x^{i}}}^{M}\frac{\partial}{\partial x^{j}}, e_{\alpha} ^{T}\Big)
- u^{\alpha} \Ricci^{M}(e_{\alpha}^{T})\nonumber
+  (m- \vert A \vert ^{2})u^{\alpha}e_{\alpha}^{T} \nonumber
\\ 
&\ & - \ f \grad \vert A \vert ^{2}.\nonumber 
\end{eqnarray}
But 
\begin{eqnarray}
g^{ij} \ \Big\langle \frac{\partial}{\partial x^{i}}, Y_{j} \Big\rangle r =g^{ij} \ \frac{\partial u^{\alpha}}{\partial x^{j}}  \ \Big\langle \frac{\partial}{\partial x^{i}},e_{\alpha}^{T} \Big\rangle r,\label{6}
\end{eqnarray} 
and
\begin{eqnarray}
g^{ij} \ B\Big(  \frac{\partial}{\partial x^{i}},Y_{j}\Big)=g^{ij} \ \frac{\partial u^{\alpha}}{\partial x^{j}} \ B\Big( \frac{\partial}{\partial x^{i}},e_{\alpha}^{T} \Big), \label{7}
\end{eqnarray}
so replacing (\ref{6}) and (\ref{7}) in (\ref{5}) we obtain 
\begin{align}
(\Delta u^{\alpha})\ e_{\alpha}
&= g^{ij} \ \frac{\partial u^{\alpha}}{\partial x^{i} } \ Z_{\alpha , j}+g^{ij} \ \frac{\partial u^{\alpha}}{\partial x^{j}}  \ \Big\langle \frac{\partial}{\partial x^{i}},e_{\alpha}^{T} \Big\rangle r -g^{ij} \ \frac{\partial u^{\alpha}}{\partial x^{j}} \ B\Big( \frac{\partial}{\partial x^{i}},e_{\alpha}^{T} \Big)\label{8}
\\
& \quad +g^{ij}u^{\alpha} \nabla _{\frac{\partial}{\partial x^{i}}} ^{\mathbb{R}^{m+2}} Z_{\alpha,j} -
g^{ij}u^{\alpha}\Big\langle \nabla _{\frac{\partial}{\partial x^{i}}}^{M}\frac{\partial}{\partial x^{j}}, e_{\alpha}^{T}\Big\rangle r+g^{ij} u ^{\alpha}B\Big(  \nabla _{\frac{\partial}{\partial x^{i}}}^{M}\frac{\partial}{\partial x^{j}}, e_{\alpha} ^{T}\Big) \nonumber
\\
& \quad
-u^{\alpha} \Ricci^{M}(e_{\alpha}^{T})\nonumber
 +(m- \vert A \vert ^{2})u^{\alpha}e_{\alpha}^{T} -f \grad \vert A \vert ^{2}.\nonumber
\end{align}
Thus each term on the right-hand side of Equation (\ref{8}), except for the last one, contains either $\frac{\partial u^{\alpha}}{\partial x^{i}}$ or $u^{\alpha}$.

By the triangle inequality
$$ \vert \Delta u^{\alpha_{0}} \vert \leq \vert( \Delta u^{\alpha})e_{\alpha} \vert,$$
and since all functions and vector fields are smooth on $U$, they are bounded on $\overline{D}^{M}$, and so, on $D$. Using the hypothesis and standard inequalities, we obtain
$$ \vert \Delta u^{\alpha_{0}} \vert \leq C\Big( \sum_{\alpha, i} \Big \vert \frac{\partial u^{\alpha}}{\partial x^{i}} \Big\vert +\sum_{\alpha} \vert u^{\alpha} \vert \Big)$$
on $D$.
Since $u^{\alpha}$ is zero on a non-empty open subset of $D$, by  Aronszajn's unique continuation principle we deduce that $u^{\alpha}$ is equal to zero on $D$, and thus $\grad f $ vanishes on $D.$ This is impossible,
hence the assumption  $\partial ( \Int A_{0})\neq \emptyset $ is false. In conclusion, $ \partial (\Int A_{0})= \emptyset $, and so $ \grad f $ vanishes on the whole of $M$.

\end{proof}
Theorem ~\ref{theorem1} can be rephrased as follows:
\begin{corollary}\label{Cor}
Let $ \varphi :M^{m}\hookrightarrow \mathbb{S}^{m+1}$ be a proper-biharmonic  hypersurface. Assume that there exists a non-negative function $h$ on $M$ such that $\vert \grad \vert A \vert ^{2} \vert \leq h \vert \grad f \vert$ on $M$. Then, either $M$ has constant mean curvature, or the set of points where $\grad f \neq 0 $ is an open dense subset of $M$. 
\end{corollary}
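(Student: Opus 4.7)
The plan is to derive this directly from Theorem~\ref{theorem1} via a simple dichotomy. I would set
$$W := \{ p \in M : (\grad f)(p) \neq 0 \},$$
observe that $\grad f$ is a continuous section of $TM$ so $W$ is automatically open in $M$, and then ask whether $W$ is dense whenever $M$ fails to be CMC. The whole content of the corollary is just a topological repackaging of the previous theorem, so the work is to arrange the two possibilities cleanly.

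First I would dispose of the trivial alternative: if $W = \emptyset$, then $\grad f \equiv 0$ on the connected manifold $M$, hence $f$ is constant and $M$ is CMC, which is the first conclusion. Otherwise $W \neq \emptyset$, and I have to prove density.

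I would argue by contradiction, assuming both that $M$ is not CMC (so $W \neq \emptyset$) and that $W$ is not dense. Failure of density means that $M \setminus \overline{W}^{M}$ is a non-empty open subset of $M$, and on this subset $\grad f$ vanishes identically. Passing to any connected component (which is open, since $M$ is locally connected) yields a non-empty open \emph{connected} subset of $M$ on which $\grad f = 0$. The standing assumption $\vert \grad \vert A \vert^{2} \vert \leq h \vert \grad f \vert$ is precisely the hypothesis of Theorem~\ref{theorem1}, whose conclusion therefore forces $\grad f \equiv 0$ on all of $M$, contradicting $W \neq \emptyset$.

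There is essentially no obstacle: the only thing worth being careful about is that Theorem~\ref{theorem1} demands a \emph{connected} open set on which $\grad f$ vanishes, which is why I would pass to a connected component in the last step. No additional analytic input is needed beyond Theorem~\ref{theorem1} itself.
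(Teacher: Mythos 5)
Your proposal is correct and follows essentially the same argument as the paper: assume $M$ is not CMC, note that if $W$ were not dense then its complement's closure leaves a non-empty open set where $\grad f=0$, pass to a connected component, and invoke Theorem~\ref{theorem1} to get a contradiction. The paper's proof is identical in structure, including the step of restricting to a connected component of $M\setminus\overline{W}^{M}$.
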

\begin{proof}
Assume that $M$ is not CMC. Let
$$ W:= \{ p \in M : (\grad f )(p) \neq 0 \},$$
be a non-empty open subset in $M$. Assume that $\overline{W}^{M} \varsubsetneq M ,$ then $V= M \setminus \overline{W}^{M} $ is a non-empty, open subset of M 
and $\grad f \vert _{V}=0$, therefore $f$ is constant on a connected component $V_{1}$ of $V$.
As $f$ is constant on $V_{1}$  and $\vert \grad \vert A \vert ^{2} \vert \leq h  \ \vert \grad f \vert$ over $M$, by Theorem ~ \ref{theorem1} we deduce that $f$ is constant on $M$, which is a  contradiction, therefore
 $$\overline{W}^{M}=M.$$  
\end{proof}

The hypothesis on the existence of the function $h$ in Theorem~
\ref{theorem1} can be obtained under natural conditions on $|A|^2$ or the scalar
curvature of $M$.

\begin{corollary} \label{corollary1}
Let $ \varphi :M^{m}\hookrightarrow \mathbb{S}^{m+1}$ be a proper-biharmonic  hypersurface  with $\vert A \vert ^{2}$  constant.   Then, either $M$ has constant mean curvature, or the set of points where $\grad f \neq 0 $ is an open dense subset of $M$. 
\end{corollary}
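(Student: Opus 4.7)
The plan is to observe that this corollary is essentially an immediate specialization of Corollary \ref{Cor}, so the whole work consists in checking that the hypothesis of that corollary is vacuously satisfied when $|A|^2$ is constant.

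More precisely, if $|A|^2$ is constant on $M$, then $\grad |A|^2 = 0$ identically, so $|\grad |A|^2| = 0$. Consequently the inequality
\[
|\grad |A|^2| \leq h\,|\grad f|
\]
holds on all of $M$ with the trivial choice $h\equiv 0$, which is a non-negative function on $M$. Thus all the hypotheses of Corollary \ref{Cor} are in force.

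Applying Corollary \ref{Cor} (equivalently, Theorem \ref{theorem1}) with this $h$, the dichotomy in its conclusion gives exactly what we want: either $f$ is constant on $M$, or the open set $W = \{p\in M : (\grad f)(p)\neq 0\}$ is dense in $M$. Since there is no genuine obstacle to overcome—all the analytic and geometric work was already carried out in the proof of Theorem \ref{theorem1}—the proof is essentially a one-line reduction, and the only care required is to state the choice $h\equiv 0$ and cite Corollary \ref{Cor}.
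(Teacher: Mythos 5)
Your argument is exactly the paper's: since $\vert A\vert^{2}$ is constant, $\grad\vert A\vert^{2}=0$ and the hypothesis $\vert\grad\vert A\vert^{2}\vert\leq h\,\vert\grad f\vert$ holds trivially (e.g.\ with $h\equiv 0$), so Corollary~\ref{Cor} applies directly. The proposal is correct and coincides with the paper's proof.
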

\begin{proof}
As $ \vert A \vert ^{2}$ is constant,  the condition 
$ \vert \grad  \vert A\vert ^{2}\vert\leq h \  \vert \grad  f\vert$ on \textit{M} is automatically satisfied, thus, by  Corollary ~ \ref{Cor} we conclude.

\end{proof}
\begin{corollary}\label{corollary2}
Let $ \varphi :M^{m}\hookrightarrow \mathbb{S}^{m+1}$ be a proper-biharmonic hypersurface  with constant scalar curvature.  Then, either $M$ has constant mean curvature, or the set of points where $\grad f \neq 0 $ is an open dense subset of $M$.
\end{corollary}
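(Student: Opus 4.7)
The plan is to reduce immediately to Corollary~\ref{Cor} by producing a non-negative function $h$ on $M$ satisfying $|\grad|A|^2|\leq h\,|\grad f|$. The crucial observation is that, on any hypersurface in $\mathbb{S}^{m+1}$, the traced Gauss equation already recalled in the proof of Proposition~\ref{P} gives the pointwise identity
\begin{equation*}
|A|^2 = m(m-1) + m^2 f^2 - \Scal^M.
\end{equation*}
This is a purely algebraic relation between the smooth functions $|A|^2$, $f^2$, and $\Scal^M$, valid on all of $M$.

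Next I would take the gradient of this identity. Under the assumption that $\Scal^M$ is constant, the term $m(m-1)-\Scal^M$ is a constant and $\grad \Scal^M = 0$, so
\begin{equation*}
\grad |A|^2 = 2m^2 f \,\grad f.
\end{equation*}
Taking norms gives $|\grad|A|^2| = 2m^2 |f|\,|\grad f|$, so choosing $h := 2m^2 |f|$, which is a well-defined, smooth, non-negative function on $M$, yields the desired inequality $|\grad|A|^2|\leq h\,|\grad f|$ on $M$ (in fact with equality).

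With $h$ in hand, the hypothesis of Corollary~\ref{Cor} is satisfied, and its conclusion is exactly the dichotomy claimed: either $M$ has constant mean curvature, or the set $\{p\in M : (\grad f)(p)\neq 0\}$ is an open dense subset of $M$. There is no real obstacle here; the content of the statement lies entirely in Theorem~\ref{theorem1} and its corollary, and the role of the constant scalar curvature hypothesis is solely to convert, via Gauss, the geometric control on $\Scal^M$ into the analytic gradient bound required to invoke Aronszajn's unique continuation argument.
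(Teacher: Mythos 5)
Your proof is correct and follows exactly the paper's argument: use the traced Gauss equation to get $|A|^2=m(m-1)+m^2f^2-\Scal^M$, differentiate using the constancy of $\Scal^M$ to obtain $|\grad|A|^2|=2m^2|f|\,|\grad f|$, and apply Corollary~\ref{Cor} with $h=2m^2|f|$. (The only cosmetic quibble: $|f|$ need not be smooth at zeros of $f$, but Theorem~\ref{theorem1} only asks for a non-negative function $h$, so this is immaterial.)
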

\begin{proof}
By  Proposition \ref{P} we have
 $$ \vert A \vert ^{2} = m(m-1) +m^{2}f^{2}- \Scal^{M},
$$
which implies
$$\vert \grad  \vert A \vert^{2} \vert = 2m^{2}\vert f \vert \  \vert \grad  f \vert.
$$
Therefore, the condition
$$\vert \grad  \vert  A\vert ^{2} \vert \leq  h \vert \grad  f \vert$$
 holds on $M$ and we apply Corollary ~\ref{Cor} to conclude.

\end{proof}
\begin{remark}
$  $
\begin{enumerate}
\item Corollaries \ref{corollary1} and \ref{corollary2} are meaningful because $M$ is not assumed to be compact:
\begin{enumerate}
\item
A direct consequence of J.-H.~Chen's result is that if $M$ is compact and $\vert A \vert ^{2}$ is constant, then $\grad f$ vanishes on the whole manifold $M$(see \cite{Thesis}).
\item If M is compact and its scalar curvature is constant,  Maeta and Ou show in \cite{Maeta and Ou} that $f$ is constant.
\end{enumerate}
Therefore, Corollaries \ref{corollary1} and \ref{corollary2} can be seen as extensions of results in \cite{Maeta and Ou} and \cite{Thesis}, because they show that if $f$ is constant on a non-empty open subset of $M$ then $f$ is constant on $M$.
\item Theorem~ \ref{theorem1} is meaningful even in the compact case.
\item
Consider $\varphi :M^{m}\hookrightarrow N^{m+1}(c), \  (c \leq 0)$ a proper-biharmonic hypersurface. Assume that $\grad f $ vanishes on an open subset. Then, it follows that $f$ is constant on an open (connected) subset. But, as $c\leq 0$, the constant has to be zero (see \cite{Oniciuc 2002} for a more general statement),
so $ \varphi $ is harmonic on an open subset, therefore on the whole manifold $M$.
\end{enumerate}
\end{remark}
As a direct application of Corollary  \ref{corollary2} we can give the following result. 

\begin{proposition} \label{Prop}
Let $ \varphi :M^{m}\hookrightarrow \mathbb{S}^{m+1}$ be a proper-biharmonic hypersurface  with constant scalar curvature. Assume that there exists a connected component of $M_{A}$ where the number of distinct
principal curvatures is  at most six. Then M has constant mean curvature.
\end{proposition}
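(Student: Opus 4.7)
The plan is to argue by contradiction, combining Corollary \ref{corollary2} with the Fu--Hong theorem (cited in the introduction from \cite{fu-hong}) applied on a suitable open subset. Suppose $M$ is not CMC. Since $\Scal^M$ is constant, Corollary \ref{corollary2} applies and gives that the open set
\[
W=\{p\in M:(\grad f)(p)\neq 0\}
\]
is dense in $M$. Let $M_A^0\subset M_A$ denote the given connected component on which the number of distinct principal curvatures is constant and at most $6$; by definition of $M_A$ this is an open subset of $M$. Since $W$ is dense, the intersection $W\cap M_A^0$ is a non-empty open subset of $M$.

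On $W\cap M_A^0$ every hypothesis of the Fu--Hong theorem is satisfied locally: the hypersurface is proper-biharmonic, $\Scal^M$ is constant, the principal curvatures $\lambda_1\geq\cdots\geq\lambda_m$ are smooth with locally constant multiplicities (because $M_A^0$ is a connected component of $M_A$), and the number of distinct $\lambda_i$'s is $\leq 6$. The Fu--Hong proof is local in nature: it analyses the biharmonic system
\[
\Delta f=(m-|A|^2)f,\qquad A(\grad f)=-\tfrac{m}{2}f\,\grad f,
\]
together with the Codazzi equation and the scalar curvature identity $|A|^2=m(m-1)+m^2f^2-\Scal^M$, and derives at any point of $M_A$ with at most six distinct principal curvatures that $\grad f=0$. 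Applying this pointwise on $W\cap M_A^0$ forces $\grad f\equiv 0$ there, contradicting the very definition of $W$.

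The main obstacle is verifying that the Fu--Hong argument is truly local and transfers verbatim to the open set $W\cap M_A^0$, rather than relying on a global or compactness hypothesis. This reduces to checking that each step in \cite{fu-hong} is carried out in a neighbourhood of a point where the principal curvatures are smooth with constant multiplicities and where $\grad f\neq 0$, which are exactly the features enjoyed by $W\cap M_A^0$. Once this local conclusion $\grad f=0$ on $W\cap M_A^0$ is obtained, the contradiction with $W\cap M_A^0\subset W$ is immediate, and one concludes that the initial assumption was false, so $M$ has constant mean curvature.
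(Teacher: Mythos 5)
Your argument is correct and uses exactly the same two ingredients as the paper's proof: the Fu--Hong theorem applied on the distinguished connected component of $M_A$ (where the number of distinct principal curvatures is constant and at most six), combined with Corollary \ref{corollary2}. The paper simply runs the steps in the opposite order --- first concluding from Fu--Hong that $f$ is constant on that component, then invoking Corollary \ref{corollary2} to rule out density of $\{\grad f\neq 0\}$ --- whereas you phrase it as a contradiction, but the content is the same, including the implicit reliance on the local nature of the Fu--Hong argument.
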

\begin{proof}
Let $U$ be a connected component of $M_{A}$. The number of distinct principal curvatures is constant and at most $6$.

As $\Scal ^{M}$ is constant,  by Theorem 1.1 of \cite{fu-hong} we obtain that  $f$ is constant on $U$.
On the other hand, by Corollary {\ref{corollary2}}, we deduce that $f$ is constant on $M$.

\end{proof}

\begin{corollary}
Let $ \varphi :M^{m}\hookrightarrow \mathbb{S}^{m+1}$ be a proper-biharmonic  hypersurface. Assume that there exists a non-negative function $h$ such that $\vert \grad \vert A \vert ^{2} \vert \leq h \vert \grad f \vert$, and $M$ is not CMC. Denote by $U$ an open connected component of $M_{A}$. Then, on $U$ we have:
\begin{enumerate}
\item $-\frac{m}{2}f$ is a principal curvature with multiplicity equal to 1;
\item $\frac{\grad f }{\vert \grad f \vert }$ is a vector field defined on an open dense subset of $U$ and its integral curves are geodesics;
\item the number of distinct principal curvatures is at least  $3$ and $\vert A \vert ^{2}>\frac{m^{2}(m+8)}{4(m-1)}f^{2}$ on an open dense subset of $U$(see \cite{BMO}).
\end{enumerate}
\end{corollary}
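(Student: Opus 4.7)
The plan is to build on Corollary~\ref{Cor}: since $M$ is not CMC and the gradient estimate is in force, the set $W=\{p\in M:(\grad f)(p)\neq 0\}$ is open and dense in $M$, so $W\cap U$ is open and dense in $U$. On $U$, a connected component of $M_A$, the principal curvatures $\lambda_1\geq\cdots\geq\lambda_m$ are smooth and the shape operator admits a smooth orthonormal eigenframe $\{E_i\}$, so everything reduces to a Codazzi computation on this chart, with conclusions extended to $U$ by continuity and by the constancy of multiplicities.

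For (1), the tangential biharmonic equation $A(\grad f)=-\frac{m}{2}f\grad f$ immediately identifies $-\frac{m}{2}f$ as a principal curvature on $W\cap U$. If its multiplicity were $\geq 2$ at one point of $W\cap U$, it would be $\geq 2$ on all of $W\cap U$ (multiplicities are constant on $U$), and the Codazzi calculation carried out in the proof of Proposition~\ref{PR}, culminating in equation (\ref{propeq}) with $\lambda_1=\lambda_2=-\frac{m}{2}f$ and $E_1=\grad f/|\grad f|$, would yield $E_1 f=0$ and hence $\grad f=0$, a contradiction. Thus the multiplicity is $1$ on $W\cap U$, and by density and constancy of multiplicities the same holds on $U$.

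For (2), $E_1=\grad f/|\grad f|$ is a well-defined smooth unit vector field on $W\cap U$, spanning the simple eigenspace of $-\frac{m}{2}f$ by (1). Completing $E_1$ to a local eigenframe $\{E_i\}$ and using $E_a f=0$ for $a\geq 2$ (so $E_a\lambda_1=0$), I specialize the Codazzi equation (\ref{E1}) to $i=a$, $j=1$ and take the $E_1$-component to get $(\lambda_a-\lambda_1)\omega_a^1(E_1)=0$. Since $\lambda_1$ has multiplicity $1$, $\lambda_a\neq\lambda_1$ for $a\geq 2$, so $\omega_a^1(E_1)=-\langle E_a,\nabla_{E_1}E_1\rangle=0$; together with $\langle\nabla_{E_1}E_1,E_1\rangle=0$ this gives $\nabla_{E_1}E_1=0$, so the integral curves are geodesics.

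For (3), the strict inequality is the substantive point and, as noted in the statement, is taken from \cite{BMO}. Once it is in hand, the bound on the number of distinct principal curvatures is a short algebraic check: if there were only two distinct principal curvatures on $U$, then by (1) we would have $\lambda_1=-\frac{m}{2}f$ (multiplicity $1$) and $\lambda_2=\cdots=\lambda_m=\mu$, and the trace condition $\trace A=mf$ forces $\mu=\frac{3m}{2(m-1)}f$. Substituting yields $|A|^2=\frac{m^2(m+8)}{4(m-1)}f^2$, contradicting the strict inequality on $W\cap U$; hence at least three distinct principal curvatures occur. The main obstacle here is really that strict version of Lemma~\ref{lemma I}, for which I would invoke \cite{BMO}; once available, the remaining arguments are routine Codazzi-frame computations as above.
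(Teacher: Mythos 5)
Your treatment of items (1) and (2) is essentially the paper's own proof: Corollary~\ref{Cor} gives density of $W\cap U$ in $U$, the tangential biharmonic equation identifies $-\frac m2 f$ as a principal curvature there, the Codazzi computation from Proposition~\ref{PR} rules out multiplicity $\geq 2$, and the scalar product of \eqref{E1} with $E_1$ (for the index pair $\{1,a\}$) yields $\omega_1^a(E_1)=0$ and hence $\nabla_{E_1}E_1=0$. These parts are correct and need no comment.

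In item (3) you have the logical dependency reversed relative to the paper, and the reversal leans on a citation that does not carry the weight you put on it. You take the \emph{strict} inequality $\vert A\vert^2>\frac{m^2(m+8)}{4(m-1)}f^2$ as an external input ``from \cite{BMO}'' and deduce ``at least $3$ distinct principal curvatures'' from it. But what \cite{BMO} actually supplies is the classification of proper-biharmonic hypersurfaces with at most two distinct principal curvatures: they are CMC. The paper uses that fact first --- since $M$ is not CMC, $U$ must have at least three distinct principal curvatures --- and only then obtains strictness, by observing that J.-H.~Chen's inequality (Lemma~\ref{lemma I}) arises from a Cauchy--Schwarz estimate on the $m-1$ principal curvatures other than $-\frac m2 f$, whose equality case forces those $m-1$ curvatures to coincide, i.e.\ exactly the two-distinct-curvature configuration just excluded. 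Your own computation (that two distinct curvatures with $\lambda_1=-\frac m2 f$ and $\trace A=mf$ give $\mu=\frac{3m}{2(m-1)}f$ and hence $\vert A\vert^2=\frac{m^2(m+8)}{4(m-1)}f^2$ exactly) is precisely this equality analysis, so nothing mathematical is missing --- but as written, the strict inequality is assumed rather than proved, and the ``short algebraic check'' then proves the wrong implication. Re-order the argument: cite \cite{BMO} for ``$\leq 2$ distinct principal curvatures $\Rightarrow$ CMC,'' conclude ``at least $3$,'' and let your equality computation deliver the strictness.
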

\begin{proof}
Since $M$ does not have constant mean curvature, by Corollary \ref{Cor} we deduce that the points of $U$ where $\grad f \neq 0$ form an open dense subset of $U$. Now, by continuity we obtain
$-\frac{m}{2}f=\lambda_{i_{0}}$, for some $i_{0}$, on $U$, and by Proposition \ref{PR} we obtain that the multiplicity of $\lambda_{i_{0}}$ is 1.

Furthermore, for simplicity, we consider $i_{0}=1$, and work on an open connected subset of $U$ where $\grad f \neq 0$ at any point.
We have $E_{1}=\frac{\grad f} {\vert \grad f \vert}$ and taking the inner product of Equation (\ref{E1}) with $E_{1}$ for $i=1$ and $j=a$ we obtain 
$$ \omega_{1}^{a}(E_{1})=0,$$
and thus $\nabla_{E_{1}}E_{1}=0$.

If the number of distinct principal curvatures is at most $2$ then $U$ is CMC (see \cite{BMO}).
As J.-H.~Chen's Inequality  is based on the Cauchy-Schwarz Inequality applied to the principal curvatures, we have a strict inequality.

\end{proof}
\begin{remark}
We note that the distribution orthogonal to that determined by
$\frac{\grad f}{\vert \grad f\vert}$ is completely integrable. The level hypersurfaces of the mean curvature $f$  have flat normal connection as  submanifolds in $S^{m+1}$ of codimension $2$ (see  \cite[Theorem 1.40]{Nistor}).
\end{remark}

Corollary \ref{Cor} allows the re-writing of some known results
replacing their global hypothesis with local variants.
\begin{corollary}\label{Corollary}
Let $ \varphi :M^{m}\hookrightarrow \mathbb{S}^{m+1}$ be a proper-biharmonic  hypersurface.  Assume that  $ \vert \grad  \vert A\vert ^{2}\vert\leq h \  \vert \grad  f\vert$ on \textit{M}, where $h$ is a non-negative function on $M$. If $M$ is not CMC, then J.-H.~ Chen's Inequality
\begin{eqnarray}\label{A inequality}
\vert A \vert ^{2} \geq \frac{m^{2}(m+8)}{4(m-1)}f^{2} 
\end{eqnarray} 
is valid everywhere on M. 
\end{corollary}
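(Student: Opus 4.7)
The plan is to deduce Corollary~\ref{Corollary} almost immediately by combining Corollary~\ref{Cor} with Lemma~\ref{lemma I} (J.-H.~Chen's Inequality), using nothing more than a density/continuity argument. The earlier results do all the substantive work: Corollary~\ref{Cor} upgrades the hypothesis on $|\grad |A|^2|$ and $|\grad f|$ into the statement that, provided $M$ is not CMC, the set
\[
W := \{ p \in M : (\grad f)(p) \neq 0 \}
\]
is an open dense subset of $M$; while Lemma~\ref{lemma I} already gives the inequality pointwise at each point of $W$.

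First, I would invoke Corollary~\ref{Cor} to conclude that, under the assumption that $M$ is not CMC together with the gradient bound $|\grad|A|^2|\le h\,|\grad f|$, the set $W$ is open and dense in $M$. Next, I would apply Lemma~\ref{lemma I} pointwise on $W$: at every $p \in W$ we have $(\grad f)(p) \neq 0$, and so
\[
|A|^{2}(p) \;\geq\; \frac{m^{2}(m+8)}{4(m-1)}\,f^{2}(p).
\]
Finally, since both $|A|^{2}$ and $f^{2}$ are smooth (hence continuous) functions on $M$, the non-strict inequality $|A|^2 - \frac{m^2(m+8)}{4(m-1)}f^2 \geq 0$, which holds on the dense subset $W$, propagates to its closure $\overline{W}^{M} = M$. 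Hence Chen's Inequality (\ref{A inequality}) is valid at every point of $M$.

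There is essentially no obstacle here; the corollary is a packaging statement whose entire content has been developed in the preceding unique continuation theorem and its corollary. The only small subtlety worth flagging in the write-up is that one needs Corollary~\ref{Cor} in its full strength, i.e.\ density of $W$ rather than mere non-emptiness, since Chen's Inequality is only guaranteed at points of $W$ and must be extended to the (possibly non-empty) set where $\grad f = 0$ via continuity.
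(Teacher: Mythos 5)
Your proposal is correct and matches the paper's proof exactly: the paper likewise applies Lemma~\ref{lemma I} on the set $W$ where $\grad f\neq 0$, uses Corollary~\ref{Cor} to get density of $W$, and extends the non-strict inequality to all of $M$ by continuity. Your remark about needing the full strength (density, not just non-emptiness) of Corollary~\ref{Cor} is the right point to flag.
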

\begin{proof}
Inequality (\ref{A inequality}) holds on $W$, and we conclude by continuity.

\end{proof}

J.-H.~Chen's Inequality enables us to obtain a more geometric version of Theorem ~ \ref{theorem1}.
\begin{theorem}
Let $ \varphi :M^{m}\hookrightarrow \mathbb{S}^{m+1}$ be a proper-biharmonic  hypersurface. Assume that $f^{2}> \frac{4(m-1)}{m(m+8)}$. If $ \grad  f$ vanishes on a non-empty open connected subset of \textit{M}, then $M$ has constant mean curvature.
\end{theorem}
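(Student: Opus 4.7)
The plan is to argue by contradiction, combining the first biharmonic equation on the interior of the zero set of $\grad f$ with J.-H.~Chen's Inequality (Lemma \ref{lemma I}), using the pointwise hypothesis $f^{2}>\frac{4(m-1)}{m(m+8)}$ to force both ends of the resulting comparison of $\vert A\vert^{2}$ to be strict. In particular, this hypothesis already guarantees $f\neq 0$ everywhere on $M$, which is what allows the biharmonic equation to be inverted for $\vert A\vert^{2}$.

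Suppose $M$ is not CMC, and set $A_{0}=\{p\in M:(\grad f)(p)=0\}$ and $W=M\setminus A_{0}$. As in the proof of Theorem \ref{theorem1}, $A_{0}$ is closed, $W$ is non-empty open, and $\Int A_{0}$ is non-empty (it contains the given open connected set on which $\grad f$ vanishes). On every connected component of $\Int A_{0}$ the function $f$ is locally constant, hence $\Delta f\equiv 0$ there; inserting this into $\Delta f=(m-\vert A\vert^{2})f$ and using $f\neq 0$ yields $\vert A\vert^{2}\equiv m$ on $\Int A_{0}$.

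Next I would run the topological step from the proof of Theorem \ref{theorem1}: if $\partial(\Int A_{0})=\emptyset$ then $\Int A_{0}$ is clopen in the connected manifold $M$ and thus equals $M$, forcing $M$ to be CMC. So pick $p_{0}\in\partial(\Int A_{0})$; since $A_{0}$ is closed, one has $\partial(\Int A_{0})\subset\partial A_{0}\subset\overline{W}$, so there is a sequence $p_{n}\in W$ with $p_{n}\to p_{0}$. Two continuity statements at $p_{0}$ then clash: from $\vert A\vert^{2}\equiv m$ on $\Int A_{0}$ we get $\vert A\vert^{2}(p_{0})=m$, while Chen's Inequality along $p_{n}\in W$ combined with the hypothesis yields
\[
\vert A\vert^{2}(p_{0})\;\geq\;\frac{m^{2}(m+8)}{4(m-1)}\,f(p_{0})^{2}\;>\;\frac{m^{2}(m+8)}{4(m-1)}\cdot\frac{4(m-1)}{m(m+8)}\;=\;m,
\]
the desired contradiction. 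The only non-routine point is the observation that the biharmonic equation forces $\vert A\vert^{2}=m$ on the whole of $\Int A_{0}$; after that, the hypothesis on $f^{2}$ upgrades Chen's inequality on $W$ to the strict bound $\vert A\vert^{2}>m$, incompatible with the boundary value at $p_{0}$, and no appeal to Aronszajn's unique continuation principle is needed.
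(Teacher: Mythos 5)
Your proposal is correct and follows essentially the same route as the paper: both arguments show $\vert A\vert^{2}\equiv m$ on $\Int A_{0}$ via the first biharmonic equation, locate a boundary point $p_{0}$ of $\Int A_{0}$ approachable both from $\Int A_{0}$ and from the set where $\grad f\neq 0$, and derive the contradiction $m=\vert A\vert^{2}(p_{0})\geq\frac{m^{2}(m+8)}{4(m-1)}f^{2}(p_{0})>m$ from J.-H.~Chen's Inequality and the hypothesis on $f^{2}$. The only cosmetic difference is that you use the pointwise bound on $f^{2}$ to see that $f$ does not vanish on $\Int A_{0}$, where the paper invokes the density of the set where $\tau(\varphi)\neq 0$; both are valid.
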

\begin{proof}
Let us denote
$$ A_{0} :=\{ p \in M :( \grad f )(p)=0 \}.$$
In the proof of Theorem ~\ref{theorem1} we have shown that $A_{0}$ is a closed subset of $M$, $\Int A_{0}\neq \emptyset$, and if $\partial(\Int A_{0})= \emptyset$
then $\grad f $ vanishes on $M$.

As in the proof of Theorem ~\ref{theorem1}, assume that $\partial ({\Int A_{0}}) \neq \emptyset$, to reach a contradiction.
Let $p_{0}\in \partial (\Int A_{0})$, it follows that there exists a sequence of points $\{ p_{n}^{1}\}_{n\in \mathbb{N}^{*}}$  converging to $p_{0}$, $p_{n}^{1}\neq p_{0}$ and $p_{n}^{1}\in \Int A_{0}$ for any $n \in \mathbb{N}^{*}$, and there exists a sequence of points $\{p_{n}^{2}\}_{n\in \mathbb{N}^{*}}$, that converges to $p_{0}$, $p_{n}^{2}\neq p_{0}$ and $(\grad f )(p_{n}^{2})\neq 0$ for any $n\in \mathbb{N}^{*}$.

From Lemma (\ref{lemma I}) we have 
\begin{eqnarray}\label{h1}
\vert A\vert ^{2}(p_{n}^{2})\geq \frac{m^{2}(m+8)}{4(m-1)}f^{2}(p_{n}^{2}),\ \forall n \in \mathbb{N}^{*}.
\end{eqnarray} 
Now, each connected component of $\Int A_{0}$ is open in $\Int{A_{0}}$ and so in M. Thus, on each connected component of $\Int{A_{0}}$ the function $f$ is constant. 
But the constant cannot be zero as $\varphi$ is not harmonic and so $\vert A\vert ^{2}=m$. 
In conclusion, we have $\vert A \vert ^{2}=m$ on $\Int A_{0}$ and
\begin{eqnarray}\label{h2}
\vert A \vert ^{2} (p_{n}^{1})=m, \  \forall n \in \mathbb{N}^{*}.
\end{eqnarray}
Passing to the limit in (\ref{h1}) and  (\ref{h2}) we obtain
$$ m=\vert A \vert ^{2}(p_{0}) \geq \frac{m^{2}(m+8)}{4(m-1)}f^{2}(p_{0}),$$
thus $$ f^{2} \leq \frac{4(m-1)}{m(m+8)}$$
which is impossible.
\end{proof}
\begin{remark}
Compare the above result with \cite[Proposition 1.38 and Corollary 1.40]{Thesis}. 
\end{remark}
\section{Rigidity results for biharmonic hypersurfaces}\label{44}

The unique continuation properties of Section 2 can be exploited
to obtain new rigidity results. Theorem ~ \ref{M} relies essentially on the
combination of the Bochner formula applied to the vector field $\grad f$
and the J.-H.~ Chen's Inequality, made possible thanks to Corollary ~
\ref{Corollary}, while Theorem ~\ref{main'''} is a more technical alternative which puts
together a bound on the Ricci curvature and an averaged version of the
condition of \cite[Proposition 1.38]{Thesis}.

\begin{theorem}\label{M}
Let $ \varphi : M^{m} \hookrightarrow \mathbb{S}^{m+1}$ be a compact proper-biharmonic hypersurface. Assume that  $ \vert \grad \vert A \vert^{2} \vert \leq h  \ \vert \grad f \vert$ on $M$, where $h$ a non-negative function  on $M$, $\Scal^{M}\geq 0$ and
\begin{eqnarray}\label{BI}
\int_{M} \Big[m(m+8)f^{2}-4(m-1)\Big] \Scal^{M}f^{2} \ dv_{g}\geq 0.
\end{eqnarray}
Then M has constant mean curvature.
\end{theorem}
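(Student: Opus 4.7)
The plan is to argue by contradiction. Suppose $M$ is not CMC. The hypothesis $\vert \grad \vert A \vert^{2} \vert \leq h \vert \grad f \vert$ together with the non-CMC assumption places us in Corollary~\ref{Corollary}, so that J.-H.~Chen's inequality
\begin{equation*}
\vert A \vert^{2} \geq \frac{m^{2}(m+8)}{4(m-1)} f^{2}
\end{equation*}
holds pointwise on $M$.

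Multiplying Chen's inequality by the non-negative weight $\Scal^{M} f^{2}$ and integrating, then rearranging, yields
\begin{equation*}
\int_{M} (m - \vert A \vert^{2}) \Scal^{M} f^{2} \, dv_{g} \leq -\frac{m}{4(m-1)} \int_{M} \big[ m(m+8) f^{2} - 4(m-1) \big] \Scal^{M} f^{2} \, dv_{g} \leq 0,
\end{equation*}
the last inequality being hypothesis (\ref{BI}).

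Next, integrate the Bochner--Weitzenb\"ock formula for $f$ over the compact $M$, apply the Cauchy--Schwarz bound $\vert \Hess f \vert^{2} \geq (\Delta f)^{2}/m$, and substitute the biharmonic equations: $\Delta f = (m - \vert A \vert^{2}) f$ converts $\int (\Delta f)^{2}$ into $\int (m - \vert A \vert^{2})^{2} f^{2}$, while $A(\grad f) = -\frac{m}{2} f \grad f$ combined with the Gauss expression of $\Ricci^{M}$ gives $\int \Ricci^{M}(\grad f, \grad f) = (m-1)\int \vert \grad f \vert^{2} - \frac{3m^{2}}{4} \int f^{2} \vert \grad f \vert^{2}$. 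Using the integral identities
\begin{equation*}
\int \vert \grad f \vert^{2} = \int (m - \vert A \vert^{2}) f^{2}, \qquad \int f^{2} \vert \grad f \vert^{2} = \frac{1}{3} \int (m - \vert A \vert^{2}) f^{4}
\end{equation*}
(obtained from the first biharmonic equation tested against $f$ and $f^{3}/3$), one simplifies the resulting Bochner inequality to
\begin{equation*}
\int_{M} (m - \vert A \vert^{2}) \bigg[ \vert A \vert^{2} - \frac{m^{3}}{4(m-1)} f^{2} \bigg] f^{2} \, dv_{g} \leq 0.
\end{equation*}

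Finally, use the Gauss identity $\vert A \vert^{2} = m(m-1) + m^{2} f^{2} - \Scal^{M}$ to rewrite
\begin{equation*}
\vert A \vert^{2} - \frac{m^{3}}{4(m-1)} f^{2} = m(m-1) - \Scal^{M} + \frac{m^{2}(3m-4)}{4(m-1)} f^{2},
\end{equation*}
and substitute; the scalar curvature piece produces exactly the integral bounded in the second paragraph, and the remaining terms collapse via the gradient identities into
\begin{equation*}
m(m-1) \int_{M} \vert \grad f \vert^{2} \, dv_{g} + \frac{3m^{2}(3m-4)}{4(m-1)} \int_{M} f^{2} \vert \grad f \vert^{2} \, dv_{g} \leq \int_{M} (m - \vert A \vert^{2}) \Scal^{M} f^{2} \, dv_{g} \leq 0.
\end{equation*}
For $m \geq 2$ both coefficients on the left-hand side are strictly positive, so the two non-negative integrands force $\int_{M} \vert \grad f \vert^{2} \, dv_{g} = 0$, hence $\grad f \equiv 0$, contradicting the non-CMC assumption. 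The main obstacle I anticipate is exactly the algebraic identification in this last step: recognising that the combination $\vert A \vert^{2} - \frac{m^{3}}{4(m-1)} f^{2}$ produced by Bochner plus Cauchy--Schwarz, once transformed by Gauss, delivers the scalar curvature weight appearing in the Chen-plus-(\ref{BI}) bound, so that two \emph{a priori} independent inequalities dovetail into a single non-positive integral of a manifestly non-negative quantity.
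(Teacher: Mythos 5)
Your argument is correct and follows essentially the same route as the paper: argue by contradiction, invoke Corollary~\ref{Corollary} to get J.-H.~Chen's inequality everywhere, integrate the Bochner formula with $\vert \nabla df\vert^{2}\geq (\Delta f)^{2}/m$, use the Gauss identity $\vert A\vert^{2}=m(m-1)+m^{2}f^{2}-\Scal^{M}$, and close with $\Scal^{M}\geq 0$ and hypothesis~(\ref{BI}). Your bookkeeping via the test-function identities $\int_{M} f\Delta f\,dv_{g}$ and $\int_{M} f^{3}\Delta f\,dv_{g}$ is a slightly tidier packaging of the same computation, and it even yields $\int_{M}\vert\grad f\vert^{2}\,dv_{g}=0$ directly, whereas the paper first obtains $f^{2}\vert\grad f\vert^{2}\equiv 0$ and needs a short final open-set argument.
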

\begin{proof}
Assume that $M$ does not have constant mean curvature, we will argue by contradiction.

Starting with the Bochner Formula (see for example \cite{Petersen}), we have
$$-\frac{1}{2}\Delta \vert \grad f \vert^{2}= \vert \nabla d f \vert ^{2}-\langle \grad \Delta f,\grad f \rangle + \Ricci^{M}(\grad f,\grad f).$$
Now, by the Gauss Equation, we get
\begin{eqnarray}
\label{G}\\
\Ricci^{\mathbb{S}^{m+1}}(\grad f, \grad f)&=& \Ricci^{M}(\grad f, \grad f)+\vert A(\grad f)\vert ^{2} \nonumber
\\& \ & -mf \langle A(\grad f), \grad f\rangle +R^{\mathbb{S}^{m+1}}(\grad f,\eta, \grad f , \eta ),\nonumber
\end{eqnarray}
where $\eta$ is the unit normal vector field.

On the other hand, we have 
\begin{align*}
\Ricci^{\mathbb{S}^{m+1}}(\grad f , \grad f) &= 
 m\vert \grad f \vert ^{2}.
\end{align*}
Now, since  $M$ is a biharmonic submanifold of 
$\mathbb{S}^{m+1}$,

\begin{eqnarray*}
A(\grad f)&=&-\frac{m}{2}f\grad f,
\\
\vert A(\grad f )\vert ^{2} 
&=& \frac{m^{2}}{4}f^{2}\vert \grad f\vert ^{2}
,
\\
-mf\langle A(\grad f ), \grad f \rangle  &=& \frac{m^{2}}{2}f^{2}\vert  \grad f \vert^{2}.
\end{eqnarray*} 
Thus, using Equation (\ref{G}) we deduce that 
\begin{align*}
\Ricci^{M}(\grad f , \grad f )&= \Big(m-1-\frac{3m^{2}}{4}f^{2}\Big)\vert \grad f \vert ^{2}.
\end{align*}
Denote the scalar curvature $\Scal ^{M}$ by $s$. We have
\begin{eqnarray*}
\vert A \vert ^{2}&=&m(m-1)+m^{2}f^{2}-s.
\end{eqnarray*}
As $\varphi$ is biharmonic, we have
\begin{eqnarray*}
\Delta f &=&  (m- \vert A \vert ^{2})f,
\end{eqnarray*}
thus
\begin{eqnarray*}
\grad \Delta f &=& \grad [(m-\vert A \vert ^{2})f]
\\&=& m\grad f - f\grad \vert A \vert ^{2 }  -\vert A \vert ^{2} \grad f 
\\&=& m \grad f -f \grad (m^{2}f^{2}-s) -\vert A \vert ^{2}\grad f
\\&=& m \grad f -2 m^{2}f^{2}\grad f +f \grad s - \vert A \vert ^{2}\grad f 
\\ &=& (m-2m^{2}f^{2}-\vert A \vert ^{2})\grad f +f \grad s .
\end{eqnarray*} 
On the other hand, for a local orthonormal frame field $\{ e_{i} \}_{i=1}^{m}$,
\begin{eqnarray*}
\vert \nabla df \vert ^{2} &=& \sum_{i,j=1}^{m}\Big(\nabla df (e_{i},e_{j})\Big)^{2}
\\ &\geq& \sum _{i=1}^{m}\Big( \nabla df (e_{i},e_{i})\Big)^{2}
\\ &\geq& \frac{1}{m}\Big(\sum_{i=1}^{m} \nabla df (e_{i}, e_{i})\Big)^{2}
\\&\geq &\frac{1}{m}(\Delta f)^{2}.
\end{eqnarray*}
Now
\begin{eqnarray*}
-\frac{1}{2} \Delta \vert \grad f \vert ^{2} &\geq & \frac{1}{m}(\Delta f )^{2}-\langle(m-2m^{2}f^{2}-\vert A \vert ^{2})\grad f +f \grad s, \grad f \rangle 
\\ 
& \ & + \ \Big(m-1-\frac{3m^{2}}{4}f^{2}\Big)\vert \grad f \vert^{2} 
\\&\geq & \frac{1}{m}(\Delta f )^{2}+\Big(\vert A \vert ^{2}+\frac{5m^{2}}{4}f^{2}-1\Big)\vert \grad f\vert ^{2}-f\langle \grad s , \grad f \rangle . 
\end{eqnarray*}
We have
\begin{eqnarray*}
\frac{1}{m}\int _{M}(\Delta f )^{2} \ dv_{g}&=& \frac{1}{m} \int _{M} (\Delta f )(\Delta f) \ dv_{g}
\\&=& -\frac{1}{m} \int _{M}
(\Delta f) \Div (\grad f) \ dv_{g},\end{eqnarray*}
and using the Divergence Theorem we obtain
\begin{eqnarray*}
\frac{1}{m}\int _{M}(\Delta f)^{2} \, dv_{g} &=& \frac{1}{m} \int _{M}\langle \grad \Delta f, \grad f  \rangle \, dv_{g}
\\&=& - \ \frac{1}{m }\int_{M}(\vert A \vert ^{2}+2 m^{2}f^{2}-m)\vert \grad f \vert ^{2}\ dv_{g}
\\& \ & + \ \frac{1}{m}\int _{M}f\langle \grad s , \grad f\rangle \ dv_{g}.
\end{eqnarray*}
Integrating the Bochner Formula over $M$ and using the Divergence Theorem, we get 
\begin{align}
\label{INEQ}
0 &\geq -\frac{1}{m}\int _{M} (\vert A \vert ^{2}+2m^{2}f^{2}-m) \vert \grad f \vert ^{2} \ dv_{g}+\frac{1}{m} \int _{M}f\langle \grad s , \grad f\rangle \  dv_{g} {\nonumber}
\\ &\quad + \int _{M}
(\vert A \vert ^{2}+\tfrac{5m^{2}}{4}f^{2}-1)\vert \grad f \vert ^{2} \ dv_{g}-\int _{M}f\langle \grad s , \grad f\rangle \ dv_{g} {\nonumber}
\\ &\geq \int _{M}\Big[\Big(1-\tfrac{1}{m}\Big)\vert A \vert ^{2}+\Big(-\tfrac{2}{m}+\tfrac{5}{4} \Big) m^{2}f^{2}\Big]\vert \grad f \vert ^{2} \ dv_{g}
\\
& \quad +\Big( \frac{1-m}{m} \Big) \int _{M}f\langle \grad s , \grad f\rangle \ dv_{g}.\notag
\end{align}
To obtain a lower bound of the first term, we need Corollary \ref{Corollary} 
and we apply it to Equation (\ref{INEQ}) to obtain
\begin{eqnarray*}
0&\geq& \int _{M}\Big[\Big(\tfrac{m-1}{m}\Big)\Big(\tfrac{m^{2}(m+8)}{4(m-1)}\Big)f^{2}+\Big(\tfrac{5m-8}{4m}\Big)m^{2}f^{2}\Big)\Big]\vert \grad f \vert ^{2}\ dv_{g}
\\
&\ & + \ \Big( \frac{1-m}{2m} \Big)\int _{M} \langle \grad s, \grad f^{2}\rangle \  dv_{g}
\\
&\geq & \int _{M}\Big[  \tfrac{m(m+8)}{4}f^{2}+\tfrac{m(5m-8)}{4}f^{2}\Big]\vert \grad f\vert^{2} \ dv_{g}+\Big(  \frac{1-m}{2m}\Big)\int_{M}s \Delta f^{2}\ dv_{g}
\\
&\geq& \frac{3m^2}{2}\int_{M}f^{2}\vert \grad f \vert ^{2}\ dv_{g}+\Big( \frac{1-m}{2m}\Big)\int _{M}s\Delta f^{2}\ dv_{g}.
\end{eqnarray*}
Now,  we have $$ \Delta f^{2}=2 \Big( (m-\vert A \vert ^{2})f^{2}-\vert \grad f\vert ^{2} \Big),$$
thus 
\begin{align*}
&\frac{3m^{2}}{2}\int_{M} f^{2} \vert \grad f\vert ^{2}\ dv_{g}+\Big( \frac{1-m}{2m}\Big)\int_{M}2s\Big[(m-\vert A \vert ^{2})f^{2}-\vert \grad f \vert ^{2} \Big] \  dv_{g} \\&=
\frac{3m^{2}}{2}\int_{M}f^{2}\vert \grad f\vert ^{2}\ dv_{g}
+\Big(\frac{1-m}{m} \Big)\int_{M}s(m-\vert A \vert ^{2})f^{2}\ dv_{g}
\\ & \quad +\Big( \frac{m-1}{m} \Big)\int_{M}s \vert \grad f \vert ^{2}\ dv_{g}.
\end{align*}
Using Corollary \ref{Corollary} and the fact that $s\geq 0$, we obtain
\begin{align}\label{*}
0&\geq \frac{3m^{2}}{2}\int _{M}f^{2}\vert \grad f\vert ^{2} \ dv_{g}+(1-m)\int_{M}sf^{2}\ dv_{g} \nonumber
\\& \quad +
\frac{m(m+8)}{4}\int_{M}sf^{4}\ dv_{g}+\Big(\frac{m-1}{m} \Big) \int _{M}s\vert \grad f\vert ^{2}\ dv_{g}.
\end{align}
Now as $s\geq 0$, we obtain
\begin{eqnarray} \label{HI}
\int_{M}s\vert \grad f\vert ^{2}\ dv_{g} \geq 0
\end{eqnarray}
Then from Inequality (\ref{*}) we have
\begin{eqnarray}\label{N}
\\
0&\geq & \frac{3m^{2}}{2}\int _{M}f^{2}\vert \grad f\vert ^{2}\ dv_{g}+(1-m)\int _{M}sf^{2}\ dv_{g}+\frac{m(m+8)}{4}\int_{M}sf^{4} \ dv_{g}. \nonumber
\end{eqnarray}
Multiplying Inequality (\ref{N}) by $4$,
we obtain 
\begin{eqnarray*}
0&\geq & 6m^{2}\int_{M}f^{2} \vert \grad f\vert ^{2}\ dv_{g}+\int_{M}\Big[4(1-m) + m(m+8)f^{2}\Big]s f^{2}\ dv_{g}.
\end{eqnarray*}
Now as $\int _{M}[4(1-m) + m(m+8)f^{2}]s f^{2}\ dv_{g} \geq 0$ we obtain
\begin{eqnarray*}
0&\geq & 6m^{2}\int_{M}f^{2} \vert \grad f\vert ^{2}\ dv_{g}+\int_{M}\Big[4(1-m) + m(m+8)f^{2}\Big]s f^{2}\ dv_{g} \geq 0,
\end{eqnarray*}
by the sandwich rule we  conclude that 
\begin{align*}
6m^{2}\int_{M}f^{2} \vert \grad f\vert ^{2}\ dv_{g}+\int_{M}\Big[4(1-m) + m(m+8)f^{2}\Big]s f^{2}\ dv_{g}=0.
\end{align*}
Hence, at every point of $M$, $f^{2} \vert \grad f \vert^{2}=0$ which implies that, at each point of $M$   $f=0$ or $\grad f=0$.

Let $p\in M$ such that $ (\grad f)(p)\neq 0$, then $f=0 $ around $ p$, thus $\grad f=0 $ at $ p$, which is a contradiction.
Therefore, $\grad f=0$ at each point p, which implies that $f$ is constant everywhere on  $M$, and contradicts our assumption.

\end{proof}
 A weaker version of Theorem ~\ref{M} can be formulated, replacing the
condition on the scalar curvature by a combination of two lower bounds on the Ricci and scalar curvature, and an inequality involving the average  of 
the mean curvature.
\begin{theorem}\label{main'''}
Let $ \varphi : M^{m} \hookrightarrow \mathbb{S}^{m+1}$ be a compact proper-biharmonic hypersurface. Assume that there exist a non-negative function  $h$ on $M$ such that $ \vert \grad \vert A \vert^{2} \vert \leq h  \ \vert \grad f \vert$ on $M$, and a real number $a>0$ such that:
\begin{enumerate}
\item $\Ricci^{M}(X,X) \geq a > 0,$ for all  $ X \in T_{p}M,\  \vert X \vert =1 \ and$  for all $p \in M;$\label{Cdt1'''}
\item $\int _{M}[m^{2}(m+8)af^{2}-4(m-1)s
 ]f^{2} \ dv_{g} \geq 0.$ \label{Cdt2'''}
\end{enumerate}
Then M has constant mean curvature.
\end{theorem}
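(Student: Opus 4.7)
The plan is to mimic the proof of Theorem \ref{M}, arguing by contradiction and assuming $M$ is not CMC, so that by Corollary \ref{Corollary} the J.-H.~Chen inequality $\vert A\vert^{2}\geq \tfrac{m^{2}(m+8)}{4(m-1)}f^{2}$ holds pointwise on all of $M$. Two changes must be made compared to Theorem \ref{M}: the exact Gauss-derived identity $\Ricci^{M}(\grad f,\grad f)=(m-1-\tfrac{3m^{2}}{4}f^{2})\vert\grad f\vert^{2}$ is replaced by the weaker pointwise bound $\Ricci^{M}(\grad f,\grad f)\geq a\vert\grad f\vert^{2}$ coming from assumption (1), and the global sign condition $\Scal^{M}\geq 0$ used at the end of Theorem \ref{M} is replaced by the averaged condition (2).

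Starting from the Bochner formula for $\grad f$, I would combine it with the Cauchy--Schwarz estimate $\vert\nabla df\vert^{2}\geq \tfrac{1}{m}(\Delta f)^{2}$, the Ricci bound $\Ricci^{M}(\grad f,\grad f)\geq a\vert\grad f\vert^{2}$, and the identity $\grad \Delta f=(m-2m^{2}f^{2}-\vert A\vert^{2})\grad f+f\grad s$ that follows from biharmonicity and the Gauss identity $\vert A\vert^{2}=m(m-1)+m^{2}f^{2}-s$. Integrating over the compact $M$ and using $\int_{M}(\Delta f)^{2}\,dv_{g}=\int_{M}\langle \grad \Delta f,\grad f\rangle\,dv_{g}$, as in Theorem \ref{M}, one arrives at
\begin{align*}
0 &\geq \int_{M}\Big[\tfrac{m-1}{m}\vert A\vert^{2}+2m(m-1)f^{2}+a-(m-1)\Big]\vert\grad f\vert^{2}\,dv_{g}\\
&\quad -\tfrac{m-1}{m}\int_{M}f\langle \grad s,\grad f\rangle\,dv_{g}.
\end{align*}
The scalar-curvature cross-term is then processed by one further integration by parts via $\Delta f^{2}=2(m-\vert A\vert^{2})f^{2}-2\vert\grad f\vert^{2}$, which produces $\tfrac{m-1}{m}\int_{M}s(\vert A\vert^{2}-m)f^{2}\,dv_{g}+\tfrac{m-1}{m}\int_{M}s\vert\grad f\vert^{2}\,dv_{g}$.

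The key algebraic move, which is what makes the whole argument close, is to re-expand the $\vert A\vert^{2}$ in the first coefficient via the Gauss identity: the resulting $-\tfrac{m-1}{m}s\vert\grad f\vert^{2}$ contribution exactly cancels the $+\tfrac{m-1}{m}s\vert\grad f\vert^{2}$ just produced, and using $(m-1)^{2}-(m-1)=(m-1)(m-2)$, the net coefficient of $\vert\grad f\vert^{2}$ reduces to $(m-1)(m-2)+3m(m-1)f^{2}+a\geq a>0$. Spotting this cancellation is the main obstacle I foresee, since without it a sign condition on $s$ of the type used in Theorem \ref{M} would be unavoidable. It then remains to control $\tfrac{m-1}{m}\int_{M}s(\vert A\vert^{2}-m)f^{2}\,dv_{g}$: Chen's inequality together with $s\geq ma>0$ (obtained by tracing (1)) gives the lower bound $\tfrac{m(m+8)}{4}\int_{M}sf^{4}\,dv_{g}-(m-1)\int_{M}sf^{2}\,dv_{g}$, and rewriting (2) as $(m-1)\int_{M}sf^{2}\,dv_{g}\leq \tfrac{m^{2}(m+8)a}{4}\int_{M}f^{4}\,dv_{g}$ shows this is bounded below by $\tfrac{m(m+8)}{4}\int_{M}(s-ma)f^{4}\,dv_{g}\geq 0$. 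Combining the two estimates yields $0\geq a\int_{M}\vert\grad f\vert^{2}\,dv_{g}>0$, the sought contradiction.
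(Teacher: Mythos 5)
Your proof is correct --- I verified the computation: after substituting $\vert A\vert^{2}=m(m-1)+m^{2}f^{2}-s$ into the coefficient $\tfrac{m-1}{m}\vert A\vert^{2}+2m(m-1)f^{2}+a-(m-1)$, the resulting $-\tfrac{m-1}{m}s\vert\grad f\vert^{2}$ does cancel the $+\tfrac{m-1}{m}s\vert\grad f\vert^{2}$ produced by the integration by parts, leaving the coefficient $(m-1)(m-2)+3m(m-1)f^{2}+a\geq a>0$, while the remaining term is nonnegative by Chen's inequality (valid everywhere via Corollary~\ref{Corollary}), $s\geq ma$ and hypothesis~(2). But your route is genuinely different from the paper's in its second half. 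The paper does not touch the Bochner term $\Ricci^{M}(\grad f,\grad f)$ with hypothesis~(1): it keeps the exact identity $\Ricci^{M}(\grad f,\grad f)=(m-1-\tfrac{3m^{2}}{4}f^{2})\vert\grad f\vert^{2}$ coming from the Gauss equation and $A(\grad f)=-\tfrac{m}{2}f\grad f$, reproduces inequality (\ref{*}) of Theorem~\ref{M} verbatim as (\ref{*'''}), and then disposes of the leftover term $\tfrac{m-1}{m}\int_{M}s\vert\grad f\vert^{2}$ by chaining $s\geq ma$, the Lichnerowicz--Obata bound $\lambda_{1}\geq ma/(m-1)$, an $L^{2}$-eigenfunction expansion of $f$, and the Cauchy--Schwarz inequality applied to $\left(\int_{M}f\,dv_{g}\right)^{2}$; the contradiction is then obtained, as in Theorem~\ref{M}, from $f^{2}\vert\grad f\vert^{2}\equiv 0$ plus the pointwise open-set argument. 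You instead inject hypothesis~(1) directly into the Bochner formula as $\Ricci^{M}(\grad f,\grad f)\geq a\vert\grad f\vert^{2}$ and let the two $s\vert\grad f\vert^{2}$ contributions cancel algebraically, reaching the cleaner contradiction $0\geq a\int_{M}\vert\grad f\vert^{2}>0$ with no spectral input at all. What each buys: your version is shorter, dispenses with Obata and the Poincar\'e-type inequality, and makes transparent that the only job of the Ricci bound (beyond forcing $s\geq ma$) is to supply a strictly positive coefficient for $\vert\grad f\vert^{2}$; the paper's version reuses the machinery of Theorem~\ref{M} unchanged up to (\ref{*'''}), though its Obata/Cauchy--Schwarz steps ultimately recover only $\tfrac{m-1}{m}\int_{M}s\vert\grad f\vert^{2}\geq 0$, which $s\geq ma>0$ already gives, so your cancellation is arguably the more economical way to exploit hypothesis~(1).
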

\begin{proof}
Proceeding in the exact same way as in the proof of the previous theorem,
we reach
\begin{align}\label{*'''}
0&\geq \frac{3m^{2}}{2}\int _{M}f^{2}\vert \grad f\vert ^{2} \ dv_{g}+(1-m)\int_{M}sf^{2}\ dv_{g} \nonumber
\\& \quad +
\frac{m(m+8)}{4}\int_{M}sf^{4}\ dv_{g}+\Big(\frac{m-1}{m} \Big) \int _{M}s\vert \grad f\vert ^{2}\ dv_{g}.
\end{align}
To control the terms in $f^{2}$, for the Hilbert space $L^{2}(M)$, we consider
an orthonormal basis $\{ f_{i}\}_{i=0}^{\infty}$ of $C^\infty(M)$-eigenfunctions of the Laplacian, i.e. $\Delta f_{i}= \lambda _{i}f_{i}$,  where $\lambda_{0}=0 < \lambda _{1} \leq \lambda _{2}\leq \cdots$, and
$\int _{M}f_{i}f_{j} \  dv_{g}=\delta_{ij}$.
\\

Let $f\in C^\infty(M)$, then $f=\sum_{i=0}^{\infty}\mu_{i}f_{i}$, where $f_{0}=\frac{1}{\sqrt{\Vol(M)}}$ and  $\mu_{0}=\frac{1}{\sqrt{\Vol(M)}} \int_{M}f \ dv_{g}. $
\\
Then
\begin{align*}
\int_{M}f^{2} \ dv_{g}=\sum_{i=0}^{\infty}\mu_{i}^{2}.
\end{align*}
Also,
\begin{align*}
\Delta f =\sum_{i=0}^{\infty}\lambda _{i}\mu _{i}f_{i}=\sum_{i=1}^{\infty}\mu _{i}\lambda _{i}f_{i},
\end{align*}
so we have 
\begin{eqnarray*}
\int _{M}f\Delta f \ dv_{g}
&=&\sum_{i=1}^{\infty} \lambda _{i}\mu_{i}^{2} 
\\
&\geq & \lambda _{1}\sum_{i=1}^{\infty}\mu _{i}^{2} =
\lambda _{1} \Big( \int_{M}f^{2}\ dv_{g}- \mu _{0}^{2} \Big).
\end{eqnarray*}
But 
\begin{align*}
\int_{M}f\Delta f \ dv_{g}= \int _{M} \vert \grad f\vert ^{2} \ dv_{g},
\end{align*} 
so
\begin{eqnarray*}
\int_{M}\vert \grad f\vert ^{2} \ dv_{g} &\geq & \lambda _{1} \Big[ \int_{M}f^{2}\ dv_{g}-\frac{1}{\Vol(M)}\Big( \int _{M} f \ dv_{g} \Big)^{2} \Big].
\end{eqnarray*}
Now, by Obata \cite{Obata},  $\Ricci^{M}(X,X)\geq a|X|^{2}>0$ implies that $\lambda _{1}\geq \frac{ma}{m-1} .$
Since $s \geq ma$, we have 
\begin{eqnarray}
\label{HIB}
\int_{M}s\vert \grad f\vert ^{2}\ dv_{g} &\geq & ma\int_{M}\vert \grad f\vert ^{2} \ dv_{g}
\\
&\geq & \frac{m^{2}a^{2}}{m-1}\Big[ \int_{M}f^{2} \ dv_{g}-\frac{1}{\Vol(M)}\Big( \int_{M}f  \ dv_{g} \Big)^{2} \Big].\nonumber
\end{eqnarray}
Then from Inequality (\ref{*'''})
\begin{eqnarray}
0
&\geq &\frac{3m^{2}}{2}\int_{M}f^{2}\vert \grad f\vert ^{2}\ dv_{g}+(1-m)\int _{M}sf^{2} \ dv_{g}+\frac{m(m+8)}{4}\int_{M}sf^{4}\ dv_{g} \nonumber
\\
&\ & + \ ma^{2}\int_{M}f^{2} \ dv_{g}-\frac{ma^{2}}{\Vol(M)}\Big( \int _{M}f \ dv_{g}\Big)^{2},\label{N'''}
\end{eqnarray}
and multiplying Inequality (\ref{N'''}) by $4$, and using $s\geq ma$ we obtain 
\begin{eqnarray*}
0 &\geq & 6m^{2}\int _{M} f^{2}\vert \grad f\vert ^{2}\ dv_{g}+4(1-m)\int_{M}sf^{2} \ dv_{g}+m^{2}(m+8)a \int_{M}f^{4} \ dv_{g}
\\
&\ & +\ 4ma^{2}\int_{M}f^{2} \ dv_{g}-\frac{4ma^{2}}{\Vol(M)}\Big( \int_{M}f \ dv_{g} \Big)^{2}
\\
&\geq & 6m^{2}\int_{M}f^{2} \vert \grad f\vert ^{2}\ dv_{g} +4\int _{M}\Big[(1-m)s+ma^{2}\Big]f^{2} \ dv_{g}
\\
&\ & + \  m^{2}(m+8)a \int_{M}f^{4} \ dv_{g}-\frac{4ma^{2}}{\Vol(M)}\Big( \int_{M}f \ dv_{g} \Big)^{2}. 
\end{eqnarray*}
Now, using the Cauchy-Schwarz Inequality, we obtain
\begin{align*}
0\geq
\  & 6m^{2}\int_{M}f^{2} \vert \grad f\vert ^{2}\ dv_{g} 
\\
&+ \int _{M}\Big[4(1-m)s+4ma^{2}
+ \    m^{2}(m+8)af^{2}-4ma^{2}\Big]f^{2} \ dv_{g}.
\end{align*}
By Condition \ref{Cdt2'''} we  conclude that 
\begin{align*}
6m^{2}\int_{M}f^{2} \vert \grad f\vert ^{2}\ dv_{g} +\int _{M}\Big[4(1-m)s
+ \    m^{2}(m+8)af^{2}\Big]f^{2} \ dv_{g} =0,
\end{align*}
to prove Theorem~\ref{main'''}.

\end{proof}
\begin{remark}
The conditions in Theorems~\ref{M} 
and~\ref{main'''} are satisfied by
the 45th-parallel $\mathbb{S}^{m}(\frac{1}{\sqrt{2}})\hookrightarrow \mathbb{S}^{m+1}$ ($m\geq 2$) since
 $\Scal=2m(m-1)$, $f^{2}=1$ and we can take $a=2(m-1)$.

The generalized Clifford torus $ \mathbb{S}^{m_{1}}(\frac{1}{\sqrt{2}})\times\mathbb{S}^{m_{2}}(\frac{1}{\sqrt{2}})\hookrightarrow \mathbb{S}^{m+1}$ ($m_{1}<m_{2}$) has $ \Scal=2m_{1}(m_{1}-1)+2m_{2}(m_{2}-1)$, and $f^{2}=\Big( \frac {m_{1}- m_{2}}{m_{1}+m_{2}}\Big)^{2}$, by straightforward calculations it satisfies Condition \ref{BI} in Theorem~\ref{M} if and only if $m_{1} \in \Big[1,\frac{m}{2}-\sqrt{\tfrac{m(m-1)}{m+8}}\Big]$, ($m=m_{1}+m_{2}$) which includes  $m_{2}\geq 2m_{1}+2.$
For $m$ large enough, there always exist  Clifford tori satisfying the conditions of Theorem~\ref{main'''}, for $a=2(m_{1}-1)$, $m_{1}\geq 2$.

\end{remark}

\end{document}